\theoremstyle{plain}
\newtheorem{theorem}{Theorem}[section]
\newtheorem{lemma}[theorem]{Lemma}
\newtheorem{proposition}[theorem]{Proposition}
\newtheorem{remark}[theorem]{Remark}
\newtheorem{definition}[theorem]{Definition}
\theoremstyle{definition}
\numberwithin{equation}{section}
\newcommand{\LeftShift}{-4pt}
\def\Om{\Omega}
\def\R{\mathbb{R}}
\def\S{\mathbb{S}}
\def\N{\mathbb{N}}
\def\sd{{\rm sd}}
\def\H{\mathcal{H}}
\def\cN{\mathcal{N}}
\def\C{\mathcal{C}}
\def\Co{\mathbf{C}}
\def\J{J_{\lambda,\Co}}
\def\Chi#1{\hbox{{\large $\chi$}{\Large $_{_{#1}}$}}}
\newcommand{\res}{\mathop{\hbox{\vrule height 7pt width .5pt depth 0pt
\vrule height .5pt width 6pt depth 0pt}}\nolimits}
\newcommand{\wstar}{\overset{\!\!*}\rightharpoonup}
\newcommand{\e}{\varepsilon}
\newcommand{\pa}{\partial}
\newcommand{\medint}{-\kern -,375cm\int}
\newcommand{\medintinrigo}{-\kern -,315cm\int}
\newcommand{\wto}{\rightharpoonup}
\newcommand{\eps}{\varepsilon}
\newcommand{\Ha}{{\mathcal{H}}}
\def\beq{\begin{equation}}
\def\eeq{\end{equation}}
\title[Capillary energy outside convex sets]{The isoperimetric inequality for the capillary energy outside convex sets}
\author{Nicola Fusco}
\address{Dipartimento di Matematica e Applicazioni, Universit\`a di Napoli Federico II, Italy}\email{n.fusco@unina.it}
\author{Vesa Julin}
\address{Matematiikan ja Tilastotieteen Laitos, Jyv\"askyl\"an Yliopisto, Finland}\email{vesa.julin@jyu.fi}
\author{Massimiliano Morini}
\address{Dipartimento di Scienze Matematiche Fisiche e Informatiche, Universit\`a di Parma, Italy}\email{massimiliano.morini@unipr.it}
\author{Aldo Pratelli}
\address{Dipartimento di Matematica, Universit\`a di Pisa, Italy}\email{aldo.pratelli@unipi.it}
\begin{document}

\begin{abstract}
We study the isoperimetric problem for capillary hypersurfaces with a general contact angle $\theta \in (0, \pi)$, outside arbitrary convex sets. We prove that the capillary energy of any surface supported on any such convex set is larger than that of a spherical cap with the same volume and the same contact angle on a flat support, and we characterize the equality cases. This provides a complete solution to the isoperimetric problem for capillary surfaces outside convex sets at arbitrary contact angles, generalizing the well-known Choe-Ghomi-Ritor\'e inequality, which corresponds to the case $\theta=\frac\pi2$.
\end{abstract}

\maketitle

\section{Introduction}

Let $\Co \subset \R^N$ be a closed convex set with nonempty interior. Given a set of finite perimeter $E\subset\R^N\setminus\Co$ and $\lambda\in(-1,1)$ we define the capillary energy as 
\[
\J(E):=P(E; \R^N \setminus \Co) - \lambda \H^{N-1}(\pa^*E \cap \Co).
\]
Here, for any Borel set $G$, $P(E;G)=\H^{N-1}(\pa E^*\cap G)$ and $\pa^*E$ is the reduced boundary of $E$ (for the definitions and the relevant properties see~\cite{AmbrosioFuscoPallara00, Maggi12}).
The capillary energy has a natural physical motivation as it models a liquid drop supported on a given substrate and we refer to~\cite{Finnbook} for a comprehensive introduction to the topic.

For every $m >0$ we consider the isoperimetric problem 
\begin{equation} 
\label{def:min-prob}
I_{\Co}(m) := \inf\{ \J(E):\,\,E \subset \R^N \setminus \Co, \, |E| =m \}.
\end{equation}
When the convex set $\Co$ is bounded the problem~\eqref{def:min-prob} has a minimizer, and if $\Co$ is in addition smooth, then the minimizer is smooth up to a small singular set and the free boundary $\pa E \setminus \Co$ meets the surface $\pa \Co$ with a contact angle $\theta=\arccos\lambda$ given by the classical Young's law ~\cite{Taylor77, De-PhilippisMaggi15}. We also mention the recent work related to Allard type regularity for critical sets ~\cite{gasparetto}. When the convex set is unbounded the problem~\eqref{def:min-prob} might not admit a minimizer. This happens for instance when $\Co=\C\times \R\subset \R^3$, with $\C$ the epigraph of a parabola. In this case, as a consequence of our main Theorem~\ref{thm1}, minimizing sequences slide upwards to infinity along the boundary of $\Co$ and the isoperimetric profile~\eqref{def:min-prob} agrees with the profile given by the half-space. In the case $\lambda = 0$ the problem for unbounded general convex sets $\Co$ is studied in~\cite{FMMN}.

The issue we want to address here is to find the convex sets $\Co$ for which the value of~\eqref{def:min-prob} is the smallest. 
In the case $\lambda= 0$ the problem reduces to the relative isoperimetric inequality outside convex sets proven by Choe-Ghomi-Ritor\'e in~\cite{ChGhoRi07}: using the tools developed in ~\cite{ChGhoRi06} they show that the half-space gives the lowest value for~\eqref{def:min-prob} . On the other hand, rather surprisingly the capillary case with general $\lambda \neq 0$ has remained open until now as all the methods devised for the relative isoperimetric problem do not seem to be applicable to ~\eqref{def:min-prob}. In this paper, we solve the problem completely for general $\lambda\in (-1,1)$ by adopting a different approach.

In order to state our main result we denote the half-space by $\textbf{H} = \{ x \in \R^N : x\cdot e_N < \lambda \}$ and by $B^\lambda_r$ the (solid) spherical cap 
\[
B^{\lambda}_r = \{ x \in B_r : x\cdot e_N > \lambda\}.
\]
Given $m>0$, we let $B^\lambda[m] = B^\lambda_r$ denote the spherical cap with radius $r$ such that $|B^\lambda[m]|=m$. When $r=1$ we will simply write $B^\lambda$ instead of $B^\lambda_1$. Our main result is the following. 
%


\begin{figure}[h!]
\includegraphics[scale=0.25]{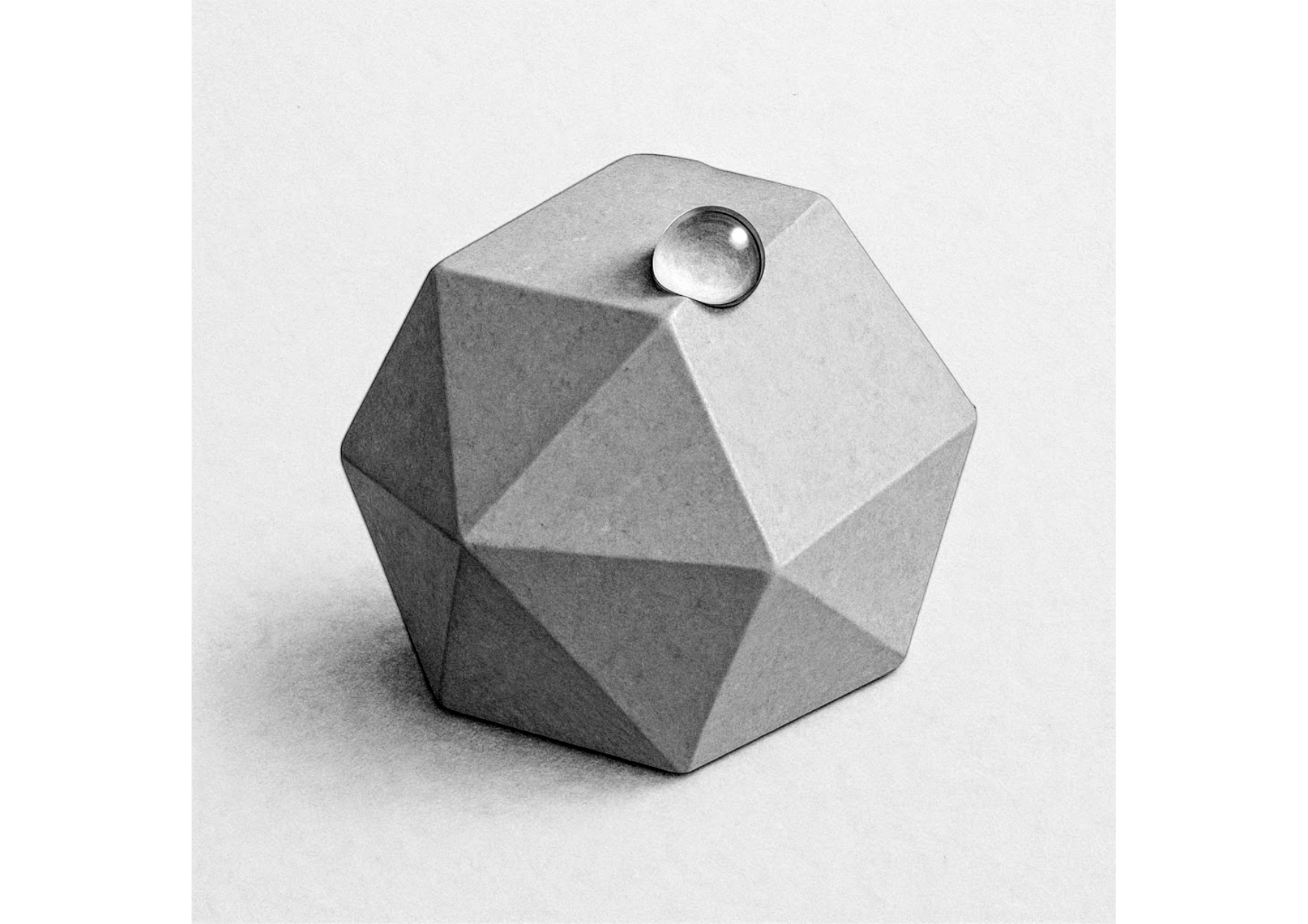}
\caption{Droplet supported on a convex polyhedron}
\label{fig-cylinder}
\end{figure}

\begin{theorem}
\label{thm1}
Let $\lambda \in (-1,1)$ and let $\Co$ be a closed convex set. 
For every set of finite perimeter $E\subset\R^N\setminus\Co$ such that $|E|=m$ we have
\beq\label{main1}
\J(E)\geq J_{\lambda,\textbf{H}}(B^\lambda[m]). 
\eeq
Moreover the equality holds if and only if $E$ is isometric to $B^\lambda[m]$ and $E$ sits on a flat part of the boundary of $\Co$.

\end{theorem}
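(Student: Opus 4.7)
The plan is to adapt Cabr\'e's Alexandrov--Bakelman--Pucci (ABP) approach to the capillary setting with a general convex obstacle. By a standard approximation argument I would first reduce to the case in which $\Co$ is smooth and bounded and $E$ is smooth, with $\pa E\setminus\Co$ meeting $\pa\Co$ transversally; set $c:=\J(E)/|E|$ and solve the mixed Neumann problem
\[
\Delta u=c\text{ in }E,\qquad \nabla u\cdot\nu_E=1\text{ on }\pa E\setminus\Co,\qquad \nabla u\cdot\nu_E=-\lambda\text{ on }\pa E\cap\pa\Co,
\]
whose compatibility condition $\int_{\pa E}\nabla u\cdot\nu_E=\J(E)=c|E|$ is guaranteed by the choice of $c$.

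Let $\Gamma\subseteq E$ be the lower contact set of $u$, on which $D^2u\geq 0$ and hence $\det D^2u\leq(c/N)^N$ by AM--GM. The core of the argument is to produce a set $\Sigma\subseteq\nabla u(\Gamma)$ with $|\Sigma|\geq|B^\lambda|$, since then the area formula and ABP give
\[
|B^\lambda|\leq|\nabla u(\Gamma)|\leq\int_\Gamma\det D^2u\,dx\leq (c/N)^N|E|,
\]
which rearranges exactly to $\J(E)=c|E|\geq N|B^\lambda|^{1/N}|E|^{(N-1)/N}=J_{\lambda,\textbf{H}}(B^\lambda[m])$.

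To identify $\Sigma$ I would test with the affine functions $\ell_p(x):=p\cdot x$ and study a minimiser $x_p$ of $u-\ell_p$ on $\overline E$: if $x_p$ is interior then $\nabla u(x_p)=p$; on the free boundary the Neumann datum forces $|p|\geq 1$; on the wet boundary, using $\nu_E=-\nu_\Co$ and the Neumann datum $-\lambda$, the first-order minimality condition forces $p\cdot\nu_\Co(x_p)\leq\lambda$. Hence $\nabla u(\Gamma)$ contains every $p$ with $|p|<1$ satisfying $p\cdot\nu_\Co(x)>\lambda$ for every wet-boundary point $x$. In the half-space case $\nu_\Co\equiv e_N$ is constant and this set is exactly $B^\lambda$, which closes the argument. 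For a general convex $\Co$, however, the varying direction of $\nu_\Co$ along the wet boundary can shrink the admissible set strictly below measure $|B^\lambda|$: \emph{this is the main obstacle, and the reason the generalization is nontrivial}. My plan to overcome it is to replace the affine test family by a richer one adapted to the supporting structure of $\Co$ (e.g.\ paraboloids whose centres slide on supporting hyperplanes of $\Co$, so that only one supporting direction is felt at each contact event) and then to perform an averaging or Brunn--Minkowski-type argument over supporting directions $\xi\in\mathbb{S}^{N-1}$, using the convexity of $\Co$ to glue the resulting local target sets into one of total measure at least $|B^\lambda|$.

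For the equality case in \eqref{main1}, saturating each step of the chain above forces $\det D^2u=(c/N)^N$ and $\Gamma=E$, hence $D^2u=(c/N)I$ on $E$. Thus $u$ is a quadratic polynomial, so the free part of $\pa E$ is a piece of a sphere and $E$ is congruent to $B^\lambda[m]$. Saturation of the Neumann datum on the wet part then forces $\nu_\Co$ to be constant along $\pa E\cap\pa\Co$, and by convexity of $\Co$ this means $E$ must sit on a flat portion of $\pa\Co$, as claimed.
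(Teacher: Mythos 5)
Your ABP setup, the Neumann problem, and the computation of the boundary conditions at a contact minimum (giving $|p|\geq 1$ on $\Sigma$ and $p\cdot\nu_\Co(x_p)\leq\lambda$ on $\Gamma$) are all correct and match the paper's strategy. You also correctly identify the central obstruction. However, the argument as written has a genuine gap, and the sketch you propose to bridge it is not a workable route.

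The set you end up with is
\[
\Sigma=\big\{p\in B_1:\ p\cdot\nu_\Co(x)>\lambda\ \text{for \emph{every} wet-boundary point } x\big\},
\]
because the contact-point argument with a single affine test function only rules out a wet-boundary minimiser if the slope $p$ is inadmissible at \emph{all} of $\Gamma$. This set can have measure far below $|B^\lambda|$ (e.g. take $\Co$ a wedge whose two faces have normals with negative inner product and $\lambda\geq 0$: the intersection of the two half-spaces $\{p\cdot\nu_i>\lambda\}$ is then strictly smaller than a half-ball), so the chain $|B^\lambda|\leq|\Sigma|$ simply fails. The fix is not to enlarge the family of test \emph{functions} but to weaken the admissibility condition: one should require $\xi\cdot\nu_\Co(x_0)>\lambda$ only at the specific contact point $x_0\in\Gamma$ where the slope $\xi$ first touches the restriction $u|_\Gamma$ from below, i.e.\ one works with the union over $x_0$ of $\{\xi\in J_\Gamma u(x_0):\ |\xi|<1,\ \xi\cdot\nu_\Co(x_0)>\lambda\}$. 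Showing that each such $\xi$ actually lies in $\nabla u(\widehat\Omega)$ is no longer the trivial first-order-minimality observation: one must slide the supporting plane down and argue that the resulting global contact point cannot be $x_0$ itself, which the paper achieves by first establishing a viscosity-supersolution property up to the corner $\gamma$ (Proposition~\ref{th:corner} and Lemma~\ref{lem:u-gamma}). And once this reformulation is in place, the remaining task is a genuinely new geometric theorem (Theorem~\ref{thm:maingeo}): for \emph{any} function $v$ on a compact subset of $\pa\Co$ the union of restricted subdifferentials has measure at least $|B^\lambda|$. This is proved using the half-line property of subdifferentials forced by convexity and a layer-cake/projection argument onto the sphere $\pa B_r$ with $r=\sqrt{1-\lambda^2}$; it is not an averaging or Brunn--Minkowski argument over supporting directions, and it is here that essentially all the new work of the paper lives. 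Your paraboloid/averaging sketch does not supply a substitute for either of these two missing ingredients.

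A secondary remark: your treatment of the equality case (``$u$ quadratic $\Rightarrow$ $E$ congruent to a cap'' and ``saturation forces $\nu_\Co$ constant on the wet part'') is heuristically right but also compresses a substantial amount of work, since in the end one is only approximating by smooth $\Co_n$, $\Omega_n$ and must pass the ABP identities to the limit with enough control on $\widehat\Omega_n$ and on the boundary conditions; this is what Lemma~\ref{approbis} and the final part of the paper's proof handle.
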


In the case where $\Co$ has empty interior the capillary energy must be defined as in~\eqref{empty1}.

We highlight also that we do not assume any regularity on $\Co$. In particular, the theorem above applies to the case where $\Co$ is an infinite wedge and shows that the capillary energy of a droplet sitting outside a wedge and wetting its ridge has energy strictly larger than a spherical cap lying on a flat surface, a fact that, to the best of our knowledge, was not proven before.

Instead, the capillary isoperimetric problem \emph{inside} a convex wedge for $\lambda=0$ was studied in~\cite{lopez} where it is proved that the minimizer of the capillary energy is a portion of a ball centered at the ridge of the wedge. The same result holds also for critical points, as a consequence of the generalized Heinze-Karcher inequality proven in~\cite{JWXZ}. Instead, Theorem~\ref{thm1} implies that the capillary isoperimetric problem {\em outside} a convex wedge has the opposite behavior, as explained before.

As we already mentioned, the case $\lambda =0$ of Theorem~\ref{thm1} is the relative isoperimetric inequality due to Choe-Ghomi-Ritor\'e~\cite{ChGhoRi07}, see also ~\cite{FM2023} for the rigidity, i.e., the characterization of the equality case for general, possibly nonsmooth convex sets. We also refer to
\cite{LiuWangWeng} for an alternative proof of the same inequality and
to~\cite{krummel} for the problem in higher codimension.

In order to prove Theorem~\ref{thm1} we need to introduce some novel ideas, that will be explained in more detail in Section~\ref{sec:overview} below. Indeed, the approach based on normal cones developed in ~\cite{ChGhoRi06, ChGhoRi07} for the case $\lambda =0$ (and further refined in ~\cite{FM2023}) gives only information on the free boundary $\pa E \setminus \Co$, while the contact region $\pa E \cap \Co$ remains invisible. In order to overcome this, we develop the ABP-method for the capillary isoperimetric problem outside convex sets. The ABP approach was originally introduced by Cabr\'e for the standard isoperimetric inequality and extended to the relative isoperimetric problem outside convex sets (the case $\lambda = 0$) in ~\cite{LiuWangWeng}. Here we tackle the general case $\lambda\not=0$. As we will explain in the next subsection, the core of ABP-method relies on a subtle estimate on the measure of the set of subdifferentials of the function $u$ solving problem~\eqref{eq:neumann-0} below. Such an estimate turns out to be significantly difficult to obtain and requires new insights. 

We next give an overview of the ABP-argument which we use in the proof of Theorem~\ref{thm1}.

\subsection{Overview of the proof}\label{sec:overview}

The proof of Theorem~\ref{thm1} is based on the ABP-method applied to the Neumann problem~\eqref{eq:neumann-0}. We note that in the context of isoperimetric problems this method was used first time by Cabr\'e in ~\cite{cabre2000, cabre2008} and further generalized in~\cite{CRS2016, CGPRS}. A variant of this method has been used recently in~\cite{paspoz} to prove the stability of the isoperimetric inequality for the capillary energy in a half space (see also \cite{kreutzschmidt} and the recent preprint~\cite{carpaspoz} for a stronger quantitative version of the same isoperimetric inequality). 

In this paper we develop the ABP-method for the capillary isoperimetric problem outside convex sets. Let us sketch the proof and outline the main challenges of the argument. 

By scaling we may reduce to the case $|E| = |B^\lambda|$. Assume for simplicity that the set $E$ is regular in which case we denote it by $E=\Omega$. To be more precise we assume that $\Omega \subset \R^N\setminus\Co$ is a bounded Lipschitz domain with $|\Omega| = |B^\lambda|$, such that $\Sigma = \pa \Omega \setminus\Co$ and $\Gamma = \pa \Omega \cap \Co$ are smooth embedded manifolds with a common boundary denoted by $\gamma$. Let $u :\overline\Omega \to \R$ be the solution of the Neumann boundary problem 
\begin{equation}\label{eq:neumann-0} 
\begin{cases}
& \Delta u = c \quad \text{in } \, \Omega \\
&\pa_\nu u = 1 \quad \text{on } \, \Sigma\\ 
&\pa_\nu u = -\lambda \quad \text{on } \, \Gamma, 
\end{cases}
\end{equation}
where $\lambda\in(-1,1)$ and the constant 
\begin{equation}\label{eq:compatibility} 
c = \frac{\H^{N-1}(\Sigma) - \lambda \H^{N-1}(\Gamma)}{|\Omega|} = \frac{\J(\Omega )}{|\Omega|}
\end{equation}
is the one prescribed by the compatibility condition. We point out that in the case $\Omega = B^\lambda$ up to an additive constant $u(x) = \frac12 |x|^2$ and $c = N$. 

Let us now consider the points $x \in \Omega$ where $|\nabla u(x)|<1$ and such that the tangent hyperplane to the graph of $u$ at $x$ is also a supporting hyperplane. More precisely, we set
\beq\label{omegahat}
\widehat\Omega:=\big\{x\in \Om:\, |\nabla u(x)|<1\text{ and } u(y) -u(x) \geq \nabla u(x) \cdot (y-x) \quad \text{for all } \, y \in \Omega\big\}. 
\eeq
Clearly it holds $\nabla^2 u(x) \geq 0$ for all $x\in \widehat\Om$. 
In order to carry on the ABP-argument one needs to show the following crucial estimate 
\begin{equation}\label{eq:crucial-est} 
|\nabla u(\widehat\Om)| \geq |B^\lambda| 
\end{equation}
by somehow exploiting the Neumann boundary conditions in~\eqref{eq:neumann-0} and the geometry of $\Co$.

Once~\eqref{eq:crucial-est} is proven, the claim then follows by using the Area Formula, the arithmetic-geometric mean inequality and recalling the value of $c$ in~\eqref{eq:compatibility} 
\[
\begin{split}
|\Omega|=|B^\lambda|\leq |\nabla u(\widehat\Om)| \leq \int_{\widehat\Om}\det\nabla^2u \, dx \leq \int_{\widehat\Om}\frac{(\Delta u)^N}{N^N}\, dx \leq \left( \frac{\J(\Omega )}{|\Omega| N}\right)^N |\Omega|. 
\end{split}
\]
The above chain of inequalities gives the conclusion as 
\[
J_{\lambda,\textbf{H}}(B^\lambda) = N |B^\lambda| = N|\Omega|. 
\]
The case of a general set of finite perimeter $E$ follows by an approximation argument. In fact, a more refined approximation argument allows us also to characterize the equality case, see Lemma~\ref{approbis}.

It is then clear that the most relevant estimate is~\eqref{eq:crucial-est} which turns out to be challenging to prove. Indeed, the inclusion $B^\lambda \subset \nabla u(\widehat\Om)$ does not hold in general and we need to develop a more subtle argument to overcome the problem. The same estimate was already proven in~\cite{LiuWangWeng} in the case $\lambda = 0$ by reformulating the problem in terms of suitable restricted normal cones to the graph of $u$ in the spirit of ~\cite{ChGhoRi06}. However their argument does not generalize to the case $\lambda \neq 0$. 

In order to deal with the case of general $\lambda$'s, we proceed as follows. We first show that the (variational) solution to~\eqref{eq:neumann-0} is a viscosity supersolution to the same problem, in the sense of Definition~\ref{def:visco}. The latter definition is stronger than the one given in ~\cite{dupuisishii} and therefore the supersolution property in the above sense does not follow from known results. Using this property we are able to relate the subdifferentials of $u$ in $\Omega$ with the subdifferentials of the restriction of $u$ to $\Gamma$ proving the following inclusion, see Lemma~\ref{lem:u-gamma},
\beq\label{crinclusion}
\mathcal B_{u_{\Gamma}}^\lambda \subset \nabla u(\widehat\Om),
\eeq
where $\mathcal B_{u_{\Gamma}}^\lambda=\bigcup_{x\in \Gamma} \{ \xi \in J_{\Gamma} u(x) : \,|\xi|<1\text{ and }\xi\cdot\nu_\Co(x)> \lambda \}$, with $\nu_\Co(x)$ the outer normal to $\Co$ at $x$ and $ J_{\Gamma} u(x)$ the subdifferential at $x$ of the restriction of $u$ to $\Gamma$; that is, 
\[
J_\Gamma u(x):=\big\{\xi\in\R^N:\,u(y) - u(x) \geq \xi \cdot (y-x) \,\,\, \text{for all} \, y \in \Gamma\big\}.
\]
The inclusion~\eqref{crinclusion} leads to the proof of~\eqref{eq:crucial-est} provided we are able to show that
\beq\label{introcr}
\qquad\qquad\qquad\qquad |\mathcal B_{v}^\lambda|\geq|B^\lambda| \qquad \text{for any $v:K\to\R$, with $K\subset\pa\Co$ compact.}
\eeq
In fact, it is enough to prove~\eqref{introcr} only for discrete sets $K\subset\pa\Co$, see Lemma~\ref{lem:ABP}. Note that property~\eqref{introcr} has nothing to do with Neumann problem~\eqref{eq:neumann-0} and it only depends on the geometry of $\Co$. By a simple argument based on symmetry one can easily see that closed convex sets satisfy~\eqref{introcr} for $\lambda=0$, see Section~\ref{sec:maingeo}. However, the case $\lambda\not=0$ is considerably more difficult. 

In fact, due to the aformentioned discretization procedure, proving~\eqref{introcr} is equivalent to showing that, given any convex partition $A_1,\dots,A_k$ of $\R^N$, with $A_i$ the subdifferential at $x_i$ of a function $v:\{x_1,\dots,x_k\}\subset\pa\Co\to\R$, then
\beq\label{introcr1}
\sum_{i=1}^k\big|A_i\cap\{\xi\in B_1:\,\,\xi\cdot\nu_i>\lambda\}\big|\geq|B^\lambda|,
\eeq
where $\nu_i$ is the exterior normal to $\Co$ at $x_i$.

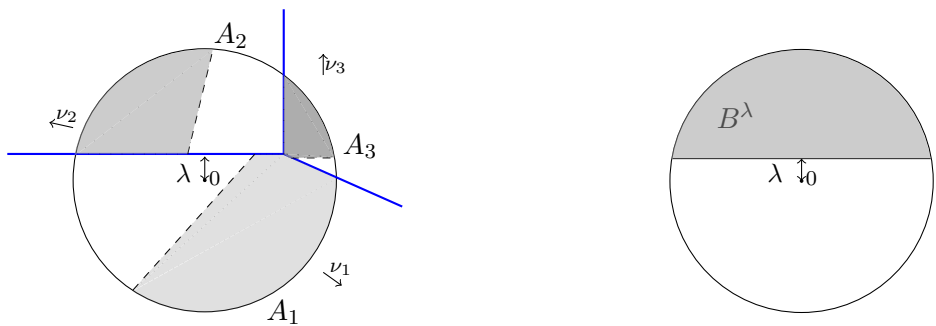
\begin{figure}[htbp]
\noindent
\begin{minipage}[c]{0.55\textwidth}
\centering
\begin{tikzpicture}[baseline={(current bounding box.center)}]
\begin{scope}[yscale=.35,xscale=.35, transform canvas={yshift=\LeftShift}]
\clip (-8, -7) rectangle (8, 7);

\draw (0,0) circle (5);
\filldraw[fill=black] (0,0) circle (.05);
\draw (0.4,-0.12) node {${}^0$}; 
\draw [thick,blue](-7.5,1) -- (3,1);
\draw [thick,blue](3,1) -- (3,6.5); 
\draw [thick,blue](3,1) -- (7.5,-1); 
\draw (5.9,1.3) node {$A_3$}; 
\draw (1,5.5) node {$A_2$}; 
\draw (3,-5) node {$A_1$}; 

\draw (5.2,-3.5) node {${}^{\nu_1}$};
\draw[->] (4.5,-3.5) -- (5.2,-4); 


\draw[->] (4.5, 4) -- (4.5,4.8); 
\draw (5 ,4.15) node {${}^{\nu_3}$};

\draw[->] (-5, 2) -- (-5.9,2.2);
\draw (-5.2,2.3) node {${}^{\nu_2}$}; 

\draw[dashed] (-0.65,0.99) -- (0.31,4.97); 
\draw[dashed] (1.9,0.99) -- (-2.75,-4.2); 
\draw[dashed] (3.27,0.85) -- (4.95,0.85); 
\draw[<->] (0,0) -- (0,0.9); 
\draw (-0.8,0.3) node {\small $\lambda$};

\draw[loosely dotted, fill=white!60!black,opacity=0.8, very thin] (3,1)-- (3,4) -- (4.9,0.88) -- cycle; 
\draw[loosely dotted, fill=white!60!black,opacity=0.8, very thin] (3,1)--(3.2,0.85) -- (4.95,0.88) -- cycle; 
\path[loosely dotted,fill=white!60!black,opacity=0.8, very thin] 
(4.92,0.85) arc[start angle = 10, end angle =52.8, radius = 5] -- cycle;

\draw[loosely dotted,fill=white!60!black,opacity=0.5, very thin] (-4.84,1) -- (-0.65,0.99) -- (0.3,4.95) -- cycle;
\path[loosely dotted, fill=white!60!black,opacity=0.5, very thin] 
(0.3,4.95) arc[start angle = 86, end angle =169, radius = 5] -- cycle;

\draw[dotted, fill=white!40!black,opacity=0.2, very thin] (-2.75,-4.2) -- (3,1)--(1.9,0.99) -- cycle;
\draw[dotted, fill=white!40!black,opacity=0.2, very thin] (-2.74,-4.2) -- (5,0.07) -- (3,1) -- cycle;
\path[dotted, fill=white!40!black,opacity=0.2, very thin] 
(-2.74,-4.22) arc[start angle =237, end angle =361, radius = 5] -- cycle;
\end{scope}
\end{tikzpicture}

\vspace{0.3cm}
\end{minipage}
\hfill
\begin{minipage}[c]{0.43\textwidth}
\centering
\begin{tikzpicture}[baseline={(current bounding box.center)}]
\begin{scope}[yscale=.35,xscale=.35]
\clip (-8, -6) rectangle (8, 6);
\draw (0,0) circle (5);
\filldraw[fill=black] (0,0) circle (.05);
\draw (0.4,-0.08) node {${}^0$}; 
\draw (-2.5,2.5) node {$B^\lambda$}; 
\draw (-1,0.3) node {\small $\lambda$};
\draw (-4.9,0.85) -- (4.9,0.85); 
\draw[<->] (0,0) -- (0,0.85); 
\path[loosely dotted, fill=white!60!black,opacity=0.5, very thin] 
(4.9,0.85) arc[start angle = 10, end angle =170, radius = 5] -- cycle;
\end{scope}
\end{tikzpicture}
\end{minipage}

\caption{%
An example of partition with the half-line property. The grey regions represent\,\, $A_i \cap \{\xi\in B_1:\,\xi\cdot\nu_i > \lambda\}$
}
\label{fig:due-cerchi-allineati}
\end{figure}

The proof of the above inequality is a difficult geometric problem that we solve completely in Section~\ref{sec:maingeo}, see Theorem~\ref{thm:maingeo}. Let us just mention here that the convexity of $\Co$ is used only to prove that the subdifferentials satisfy the following {\em half-line property}: if $\xi\in A_i$ then $\xi+t\nu_i\in A_i$ for all $t>0$. The arguments of Section~\ref{sec:maingeo} show that in fact~\eqref{introcr1} hold for any finite partition of $\R^N$ into essentially disjoint sets satisfying the above half-line property.

\section{Preliminary results}

In this section we set up the tools and show some preparatory results that we will need later for the ABP argument. To this aim, in Section~\ref{sec:neu} we establish the crucial viscosity supersolution property for the variational solutions of the Neumann problem~\eqref{eq:neumann-0}. In Section~\ref{sec:subdif} we introduce a useful notion of restricted subdifferential which enables us to reduces the inequality~\eqref{eq:crucial-est} to a property of the convex set $\Co$. 

As we mentioned in the introduction, we will first prove the relative isoperimetric inequality for regular sets. In order to emphasize this we always denote $E = \Omega$ when the set is assumed to be regular, i.e., it satisfies 
\beq\label{Coreg}
\text{$\Co\subset\R^N$ is a closed convex set of class $C^{2}$} 
\eeq
and \beq\label{Omreg} 
\begin{split}
&\text{$\Om\subset\R^N\setminus\Co$ is a bounded Lipschitz open set such that $\Sigma:=\pa\Om\setminus\Co$ } \\
&\qquad\qquad\qquad \text {is a $(N-1)$-manifold with boundary of class $C^{2}$}.
\end{split}
\eeq 
We call the boundary $\Sigma$ the \emph{free interface} and denote $\Gamma:=\pa\Om\cap \Co$, which we call the \emph{wetted region} which is also an embedded $C^2$-regular $(N-1)$-manifold with boundary. Note that $\Gamma$ and $\Sigma$ share the same boundary, which we denote by $\gamma:=\overline \Sigma\cap \Co$ and which by the assumption is a $(N-2)$-manifold of class $C^{2}$. We call $\gamma$ the \emph{contact set} of $\Sigma$ with $\Co$. Moreover, we will throughout the section assume $|\Omega| = |B^\lambda|$ if not otherwise mentioned. 

We will denote by $\nu_\Om$ and $\nu_\Co$ the outer unit normal to $\pa \Om$ and to $\pa \Co$, respectively. We also denote by $\nu_\Sigma=\nu_\Om$ the outer unit normal field on $\Sigma$, which by our assumptions admits a continuous extension at $\gamma$, still denoted by $\nu_\Sigma$. We also set $\nu_\Gamma=-\nu_\Co$ on $\Gamma$. Note that the Lipschitz regularity of $\Om$ yields 
\[
\nu_\Gamma \cdot \nu_\Sigma> -1\qquad\text{on }\gamma.
\]
Finally, we define the $\e$-neighborhood of a generic set $F \subset \R^N$ as $(F)_\e:=B_\e + F = \{ x \in \R^N : \text{dist}(x,F)< \e\}$. 

\subsection{The Neumann problem}\label{sec:neu}

In this section we consider the Neumann problem~\eqref{eq:neumann-0} under the assumptions ~\eqref{Coreg} and~\eqref{Omreg}. In addition, we assume that $\Omega$ is connected. Note that even in this case $\Omega$ is still merely a Lipschitz domain and therefore the high regularity of $u$ up to the boundary is not granted. However, it turns out that we only need the solution to attain the boundary values in the viscosity sense, for which H\"older continuity up to the boundary is enough. To this aim we consider the variational solution of the problem~\eqref{eq:neumann-0} which by definition is a function $u\in H^1(\Om)$ such that it holds 
$$
\int_{\Omega} \nabla u \cdot \nabla \varphi\,dx= -\int_\Om c \varphi\,dx+\int_{\pa\Om}g\varphi\,d\H^{N-1} 
$$
for all $\varphi\in H^1(\Omega)$, where $c$ is given by~\eqref{eq:compatibility} and
\beq\label{g}
g \equiv 1\text{ on } \Sigma \quad\text{and}\quad g \equiv -\lambda \quad\text{on } \Gamma \setminus \gamma.
\eeq
Since $\Omega$ is bounded and Lipschitz regular, $g$ is bounded and we have the compatibility condition~\eqref{eq:compatibility} which can be rewritten as
\[
c |\Omega | =\int_{\pa\Omega}g\,d\H^{N-1},
\] 
there exists a unique (up to an additive constant) variational solution of~\eqref{eq:neumann-0}. Moreover, by standard elliptic regularity theory the variational solution is H\"older continuous up to the boundary, see for instance~\cite{Nittka}.

Let us proceed to the notion of viscosity solution. In fact, since we need only the concept of viscosity supersolution for the ABP-argument, we reduce to that. Here is the definition we need. 
\begin{definition}
\label{def:visco}
A lower semicontinuous function $u : \overline \Omega \to \R$ is a \emph{viscosity supersolution} of~\eqref{eq:neumann-0} if whenever $u-\varphi$ has a local minimum at $x_0\in \overline\Om$ for $\varphi \in C^2(\R^N)$, then
\[
\begin{cases}
-\Delta \varphi(x_0) \geq -c & \text{if } \, x_0 \in \Omega,\\
\partial_{\nu_\Sigma}\varphi(x_0) -1 \geq 0 & \text{if } \, x_0 \in \Sigma \setminus \gamma,\\
\partial_{\nu_\Gamma} \varphi(x_0)+\lambda \geq 0 & \text{if } \, x_0 \in \Gamma\setminus \gamma,\\
\max\{\partial_{\nu_\Sigma}\varphi(x_0) -1, \partial_{\nu_\Gamma}\varphi(x_0)+\lambda \}\geq 0 & \text{if }x_0\in \gamma,
\end{cases}
\]
where $\partial_{\nu_\Sigma}\varphi(x_0) = \nabla \varphi(x_0) \cdot \nu_{\Sigma}(x_0)$ and $\partial_{\nu_\Gamma}\varphi(x_0) = \nabla \varphi(x_0) \cdot \nu_{\Gamma}(x_0)$. 
\end{definition}

We can now prove the main result of the section.
\begin{proposition}\label{th:corner}
Let $\Om$, $\Co$ be as in~\eqref{Coreg} and~\eqref{Omreg}. Then the variational solution of~\eqref{eq:neumann-0} is a viscosity supersolution in the sense of Definition~\ref{def:visco}.
\end{proposition}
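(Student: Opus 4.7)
The plan is to argue by contradiction via the variational formulation of~\eqref{eq:neumann-0}, tested against the truncated excess $v_\delta := (\varphi + \delta - u)_+$. Suppose $u-\varphi$ has a local minimum at $x_0 \in \ov\Omega$ and the condition of Definition~\ref{def:visco} relevant to $x_0$ fails. Replacing $\varphi$ by $\varphi - \alpha|x-x_0|^2 + \mathrm{const}$ with a small $\alpha > 0$, we may assume that $u(x_0)=\varphi(x_0)$ and that the minimum is strict with quadratic modulus in a neighbourhood of $x_0$; this leaves $\nabla\varphi(x_0)$ unchanged, so the failed condition is preserved. For $\delta>0$ sufficiently small, the quadratic strict-minimum property ensures that $\mathrm{supp}\,v_\delta$ lies in an arbitrarily small neighbourhood $U$ of $x_0$. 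Testing the weak formulation with $v_\delta$, using the pointwise identity $\nabla u\cdot\nabla v_\delta = \nabla\varphi\cdot\nabla v_\delta - |\nabla v_\delta|^2$ a.e.\ on $\{u<\varphi+\delta\}$, and integrating by parts the smooth $\nabla\varphi$-term (permissible since $\varphi\in C^2(\R^N)$, $v_\delta\in H^1(\Omega)$, $\Omega$ is Lipschitz), one obtains the fundamental estimate
\[
\int_\Omega (\Delta\varphi - c)\,v_\delta\,dx + \int_{\pa\Omega} (g - \pa_{\nu_\Omega}\varphi)\,v_\delta\,d\H^{N-1} \leq 0.
\]

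When $x_0 \in \Omega$, the boundary integral vanishes for $\delta$ small and the assumed violation $\Delta\varphi(x_0) > c$ immediately contradicts the displayed inequality. When $x_0 \in \Sigma\setminus\gamma$ or $x_0 \in \Gamma\setminus\gamma$, the variational solution $u$ is $C^2$ up to the smooth open part of $\pa\Omega$ containing $x_0$ by standard elliptic regularity, so the classical Neumann condition $\pa_{\nu_\Omega} u = g$ holds at $x_0$; combined with $\pa_{\nu_\Omega}(u-\varphi)(x_0) \leq 0$, obtained by taking a one-sided limit of the local minimum condition along the inward normal direction, this yields $\pa_{\nu_\Sigma}\varphi(x_0) \geq 1$ (respectively $\pa_{\nu_\Gamma}\varphi(x_0) \geq -\lambda$), as required.

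The genuinely new case, and the main obstacle, is $x_0 \in \gamma$: here $u$ need not be smoother than H\"older continuous, so the pointwise boundary argument above is unavailable. Assuming both corner conditions fail, $\pa_{\nu_\Sigma}\varphi(x_0) < 1$ and $\pa_{\nu_\Gamma}\varphi(x_0) < -\lambda$, the $C^2$-regularity of $\Sigma$ and $\Gamma$ near $\gamma$ together with the continuity of $\nabla\varphi$ yield $\eta>0$ with $g-\pa_{\nu_\Omega}\varphi \geq \eta$ on $(\Sigma\cup\Gamma)\cap U$, while the interior integrand remains bounded. The fundamental estimate then reduces to the reverse-trace type inequality
\[
\eta \int_{\pa\Omega} v_\delta\,d\H^{N-1} \leq C \int_\Omega v_\delta\,dx,
\]
and the task becomes to contradict this for some choice of $\alpha$ and some sequence $\delta_n \to 0$. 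The hard part will be producing the quantitative imbalance: the support of $v_\delta$ concentrates at $x_0 \in \pa\Omega$, so one would like the $(N-1)$-dimensional boundary mass of $v_\delta$ to dominate its $N$-dimensional volume mass as $\delta \to 0$. I would address this by choosing $\varphi$ whose gradient at $x_0$ points strictly into the cone generated by $\nu_\Sigma(x_0)$ and $\nu_\Gamma(x_0)$, so that in flat coordinates adapted to $\gamma$ (using the $C^2$-regularity of $\Sigma$, $\Gamma$, and $\gamma$) the coincidence set $\{u<\varphi+\delta\}$ is forced to ``hug'' the wedge $\pa\Omega$, making the boundary trace of $v_\delta$ strictly dominate its bulk as $\delta \to 0$ and contradicting the displayed bound.
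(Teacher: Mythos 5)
Your weak-formulation test with $v_\delta=(\varphi+\delta-u)_+$ is precisely the same device as the paper uses (the paper tests with $\psi=\min\{w-\eps,0\}=-v_\eps$), and it correctly establishes, at a corner point $x_0\in\gamma$, the \emph{three-way} inequality
\[
\max\bigl\{-\Delta\varphi(x_0)+c,\ \partial_{\nu_\Sigma}\varphi(x_0)-1,\ \partial_{\nu_\Gamma}\varphi(x_0)+\lambda\bigr\}\ge 0,
\]
since otherwise all three integrands in your ``fundamental estimate'' would have a favourable sign near $x_0$, forcing $v_\delta\equiv 0$ there, contrary to $v_\delta(x_0)=\delta>0$. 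So up to this point your argument is the paper's argument. The gap is entirely in removing the interior term from the max.

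Your plan for that step does not work. First, the final suggestion --- choosing $\varphi$ so that $\nabla\varphi(x_0)$ points into the cone spanned by $\nu_\Sigma(x_0)$ and $\nu_\Gamma(x_0)$ --- is not an allowed move: $\varphi$ is the given test function, and the viscosity property must be verified for \emph{every} $\varphi$ touching $u$ from below at $x_0$; the only modification you made that preserves the hypotheses is subtracting $\alpha|x-x_0|^2$, which does not change $\nabla\varphi(x_0)$. Second, even setting that aside, the reverse-trace inequality you want to contradict,
\[
\eta\int_{\partial\Omega}v_\delta\,d\H^{N-1}\le C\int_{\Omega}v_\delta\,dx,
\]
is not contradictable with only H\"older continuity of $u$ at $\gamma$ (which is all the variational theory gives on a Lipschitz domain). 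The bulk term is controlled above by $\delta\cdot|\{u<\varphi+\delta\}|\lesssim\delta^{1+N/2}$ thanks to the quadratic penalisation, but to bound the boundary term from below you need to know that $v_\delta$ stays of order $\delta$ on a boundary patch whose $(N-1)$-measure beats $\delta^{N/2}$. If $u$ is merely $\beta$-H\"older with $\beta<1$, the patch you can guarantee has radius $\sim\delta^{1/\beta}$ and $(N-1)$-measure $\sim\delta^{(N-1)/\beta}$, which is \emph{smaller} than $\delta^{N/2}$ whenever $\beta\le 2(N-1)/N$, i.e.\ always for $N\ge 2$. So the asserted imbalance fails and the argument stalls exactly at the hard step.

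The paper's device for that step is different and is the genuine content of the proposition: since $\Co$ is $C^2$, there is an exterior ball $B_1(\bar x)\subset\R^N\setminus\overline\Omega$ tangent at $x_0$, and one perturbs $\varphi$ by the barrier $\psi_q(x)=q^{-3/2}(|x|^{-q}-1)$ (centred at $\bar x$). For large $q$, $\psi_q\le 0$ on $\overline\Omega$ with $\psi_q(x_0)=0$, so $x_0$ is still a minimum of $u-(\varphi+\psi_q)$; meanwhile $\Delta\psi_q(x_0)\ge\sqrt q/2\to\infty$ while $|\nabla\psi_q(x_0)|=1/\sqrt q\to 0$. Applying the already-proved three-way inequality to $\varphi+\psi_q$ kills the interior term for $q$ large, and letting $q\to\infty$ recovers the boundary max for the original $\varphi$. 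This barrier step is what your proposal is missing.
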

\begin{proof} 

Since the equation is satisfied classically in $\Om$ and the Neumann boundary conditions are achieved in a classical sense at $\pa\Om\setminus \gamma$, it will be enough to check the property on $\gamma$. To this aim, assume that $\varphi\in C^2(\R^N)$ and $x_0\in \gamma$ are such that $(u-\varphi)(x)\geq 0$, with equality achieved only at $x_0$. We start by showing that 
\beq\label{cond1}
\max\{-\Delta \varphi(x_0)+c, \pa_{\nu_\Sigma}\varphi(x_0)-1,\pa_{\nu_\Gamma}\varphi(x_0)+\lambda\}\geq 0.
\eeq
We argue by contradiction, assuming that 
\[
\max\{-\Delta \varphi(x_0)+c, \pa_{\nu_\Sigma}\varphi(x_0)-1,\pa_{\nu_\Gamma}\varphi(x_0)+\lambda\}< 0.
\]
By continuity we may find a small ball $B_r(x_0)$ such that 
$$
-\Delta \varphi+c<0 \quad \text{in }B_r(x_0)\qquad\text{and}\qquad \pa_{\nu}\varphi-g < 0\quad\text{ on } (\pa\Omega\cap B_r(x_0))\setminus \gamma\,,
$$
with $g$ defined in~\eqref{g}.
Then, setting $w:=u-\varphi$, $h:=-c+\Delta \varphi$, $f=\pa_{\nu}(u-\varphi)=g-\pa_{\nu}\varphi$, we have that $w$ is a variational solution of
\[
\begin{cases}
-\Delta w= h & \text{in }\Om \cap B_r(x_0),\\
\pa_\nu w=f & \text{ in } \pa\Omega \cap B_r(x_0)
\end{cases}
\] 
that is, 
\beq\label{debole}
\int_{\Omega}\nabla w\cdot \nabla\psi\, dx=\int_{\Om}h\psi\, dx+\int_{\pa \Om} f\psi\, d\H^{N-1}
\eeq
for all $\psi\in H^1(\Om)$ with $\psi =0$ in $\Om\setminus B_{r}(x_0)$. Let us now choose $\psi:=\min\{w-\e, 0\}$ and note that for $\e>0$ small enough 
\[
\psi=\min\{w-\e, 0\}=0\quad\text{in }\Om\setminus B_{r}(x_0)\,.
\]
Then, \eqref{debole} combined with the fact that $h>0$ in $\Om\cap B_r(x_0)$ and $f>0$ in $\pa \Om\cap B_r(x_0)$, yield
\[
\int_{\Omega}\big|\nabla \big(\min\{w-\e, 0\}\big)\big|^2 dx= \int_{\Om\cap B_r(x_0)}h\psi\, dx+\int_{\pa \Om\cap B_r(x_0)} f\psi\, d\H^{N-1}\leq 0\,
\]
and in turn $\min\{w-\e, 0\}=0$ in $\Om$. This is impossible since $w-\e<0$ in a neighborhood of $x_0$. Thus~\eqref{cond1} is established.

The inequality~\eqref{cond1} is not good enough, since we only want information on the boundary. We thus claim that in fact
\beq\label{cond3}
\max\{ \pa_{\nu_\Sigma}\varphi(x_0)-1,\pa_{\nu_\Gamma}\varphi(x_0)+\lambda\}\geq 0. 
\eeq
To this aim we observe that for any $x_0\in\gamma$ there exists a ball $B_r(\bar x)\subset\R^N\setminus\overline\Om$ with $x_0\in\pa B_r(\bar x)$ (it is enough to take a ball contained in $\Co$ and tangent to $x_0$, which is possible by the $C^{2}$ assumption on $\Co$).
By translating and dilating we may assume for simplicity that $\bar x=0$ and that $r=1$. We perturb the test function $\varphi$ by a functions $\psi_q\in C^2(\R^n \setminus \{ 0 \} )$ that we define as 
\[
\psi_q(x) = \frac{1}{q^{3/2}}(|x|^{-q} - 1)
\]
where $q>0$ is a large number to be chosen. Then by the exterior ball condition we have that $\psi_q(x) \leq 0$ for $x \in \Omega$ while since $x_0 \in \pa B_1$ it holds $\psi_q(x_0)= 0$. Moreover by a direct computation we see that 
\[
\Delta \psi_q(x_0) \geq \frac{\sqrt{q}}{2} \quad \text{and} \quad |\nabla \psi_q(x_0)|= \frac{1}{\sqrt{q}},
\]
for $q$ sufficiently large.
We define a new test function 
\[
\varphi_q(x) = \varphi(x) + \psi_q(x).
\]
By construction it holds $\varphi_q \leq\varphi$ in $\Omega $ and $\varphi_q(x_0) = \varphi(x_0)$, hence $x_0$ is still a local minimum for $u-\varphi_q$. Thus
\[
-\Delta \varphi_q(x_0)+c\leq -\frac{\sqrt{q}}{2}-\Delta \varphi(x_0)+c<0
\]
when $q$ is large. Therefore, for $q$ large, from~\eqref{cond1} we obtain~\eqref{cond3} with $\varphi$ replaced by $\varphi_q$. Finally, letting $q\to\infty$, since $\nabla\varphi_q(x_0)\to\nabla\varphi(x_0)$ we obtain~\eqref{cond3}. This concludes the proof.
\end{proof}

\subsection{Subdifferentials and restricted subdifferentials}\label{sec:subdif}
We need some notation in order to proceed. Given $X\subset\R^N$, a function $u : X \to \R$, a subset $Y\subset X$ and a point $x\in Y$ we define the subdifferential $J_Yu(x)$ as
\[
J_Yu(x):=\big\{\xi\in\R^N:\,u(y) - u(x) \geq \xi \cdot (y-x) \,\,\, \text{for all}\, \, y \in Y\big\}.
\]
We note that $Y$ may even be a discrete set. 

\begin{remark}\label{rm:ovv}
Note that if $Y$ is compact and $u$ is continuous in $Y$, then
\[
\bigcup_{x\in Y}J_Y u(x)=\R^N.
\]
Indeed, for any $\xi\in \R^N$, we may find $c>0$ so large that $\displaystyle\sup_{x\in Y}\big(-c+\xi\cdot x-u(x))<0$. Setting 
\[
t_0:=\sup\{ t>0:\, -c+\xi\cdot x+t<u(x)\text{ for all $x\in Y$}\},
\]
we clearly have $-c+\xi\cdot \bar x+t_0=u(\bar x)$ for some $\bar x\in Y$ and $-c+\xi\cdot y+t_0\leq u(y)$ for all $y\in Y$; that is, $\xi\in J_Y u(\bar x)$.

Moreover, note that for any $x,x'\in Y$ the intersection $J_Y(x)\cap J_Y(x')$ is contained in a hyperplane orthogonal to $x'-x$.
\end{remark}
Let $\Om$, $\Gamma$ and $\Sigma$ be as in~\eqref{Coreg} and~\eqref{Omreg}.
As we will see, in order to prove the crucial inequality~\eqref{eq:crucial-est}, it turns out that all relevant information of $u$ (the solution of~\eqref{eq:neumann-0}) is contained in its restriction to $\Gamma$. In fact, it has to do with the boundary conditions rather than with the PDE. To this aim, we define the following union of \emph{restricted subdifferentials} for functions defined on compact subsets of the boundary of $\Co$, $v : K \subset \partial \Co \to \R$ 
\beq\label{blambdau}
\mathcal B_{v}^\lambda :=\bigcup_{x\in K} \{ \xi \in J_K v(x) : \,|\xi|<1\text{ and }\xi\cdot\nu_\Co(x)> \lambda \},
\eeq
where $\lambda \in (-1,1)$. 
The following crucial lemma allows us to rewrite the information on the Neumann boundary conditions in ~\eqref{eq:neumann-0} in terms of a condition on the restricted subdifferentials just introduced. 
\begin{lemma}
\label{lem:u-gamma}
Let $\Om$, $\Sigma$ and $\Co$ be as in~\eqref{Coreg} and~\eqref{Omreg}. Let $u$ be the variational solution of~\eqref{eq:neumann-0}. Then, denoting by $u_\Gamma$ the restriction of $u$ to $\Gamma$, it holds 
\[
\mathcal B_{u_{\Gamma}}^\lambda \subset \nabla u(\widehat \Om)
\]
where $\mathcal B_{u_{\Gamma}}^\lambda$ is as in ~\eqref{blambdau}, with $v=u_\Gamma$, and $\widehat \Om$ is defined in~\eqref{omegahat}.
\end{lemma}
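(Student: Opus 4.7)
The plan is to slide the affine function $y \mapsto \xi \cdot y$ vertically from below until it first touches the graph of $u$ over $\overline\Omega$, and then use the viscosity supersolution property of $u$ to force the touching point to lie in the interior of $\Omega$; interior elliptic regularity will then force $\nabla u$ to take the value $\xi$ there. Concretely, given $\xi \in \mathcal{B}^\lambda_{u_\Gamma}$, fix $x_0 \in \Gamma$ with $\xi \in J_\Gamma u(x_0)$, $|\xi|<1$, and $\xi \cdot \nu_\Co(x_0) > \lambda$, and set $\phi(y) := u(y) - \xi\cdot y$. Since $u \in C(\overline\Omega)$ and $\overline\Omega$ is compact, $\phi$ attains a global minimum at some $\bar x \in \overline\Omega$, and I aim to show that $\bar x$ may be taken inside $\Omega$.

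If $\bar x \in \Sigma \setminus \gamma$, the test function $\varphi(y) := \xi \cdot y + \phi(\bar x)$ satisfies $u - \varphi \geq 0$ with equality at $\bar x$, so Proposition~\ref{th:corner} yields $\xi \cdot \nu_\Sigma(\bar x) = \partial_{\nu_\Sigma}\varphi(\bar x) \geq 1$, contradicting $|\xi|<1$. The delicate case is $\bar x \in \Gamma$ (possibly $\bar x \in \gamma$): a naive application of the viscosity condition at $\bar x$ produces only $\xi \cdot \nu_\Co(\bar x) \leq \lambda$, which need not contradict anything because the strict inequality $\xi \cdot \nu_\Co > \lambda$ is only known at $x_0$, not at a generic point of $\Gamma$.

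The key observation that closes this case is that $x_0$ itself must then also be a global minimizer of $\phi$ on $\overline\Omega$: the putative $\bar x \in \Gamma$ minimizes $\phi$ on $\overline\Omega$, hence on $\Gamma \subset \overline\Omega$, while the assumption $\xi \in J_\Gamma u(x_0)$ says precisely that $x_0$ minimizes $\phi$ on $\Gamma$, so $\phi(x_0) = \phi(\bar x) = \min_{\overline\Omega}\phi$ and we may simply replace $\bar x$ by $x_0$. Applying the viscosity supersolution property at $x_0 \in \Gamma$, where the $\Sigma$-alternative $\xi \cdot \nu_\Sigma - 1 \geq 0$ is ruled out by $|\xi|<1$ both at interior points and at corner points of $\gamma$, one obtains $\xi \cdot \nu_\Gamma(x_0) + \lambda \geq 0$, i.e., $\xi \cdot \nu_\Co(x_0) \leq \lambda$, contradicting the defining property of $x_0$. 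Hence $\bar x$ must lie in $\Omega$; interior elliptic regularity makes $u$ smooth there, the critical-point condition gives $\nabla u(\bar x) = \xi$, and the global minimum of $\phi$ gives $u(y) - u(\bar x) \geq \xi \cdot (y - \bar x)$ for all $y \in \Omega$, so $\bar x \in \widehat\Omega$ and $\xi = \nabla u(\bar x) \in \nabla u(\widehat\Omega)$, as desired. The main obstacle is exactly the $\Gamma$ case: without the free replacement $\bar x \leadsto x_0$, the viscosity condition alone is insufficient, and it is crucial that Proposition~\ref{th:corner} delivers the supersolution property up to the corner set $\gamma$, which does not come from standard Lipschitz-domain theory.
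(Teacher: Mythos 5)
Your proof is correct and essentially the same as the paper's. The only difference is a minor reorganization: the paper first shows $\xi\notin J_{\overline\Om}u(x_0)$ (so the plane $\varphi$ must be slid strictly downwards before it touches the graph of $u$, and consequently the touching point cannot lie on $\Gamma$ where $u-\varphi\geq 0$), whereas you take the global minimizer $\bar x$ directly and observe that $\bar x\in\Gamma$ would force $x_0$ to be a global minimizer too, then apply the same viscosity argument at $x_0$; both versions rest on the identical ingredients, namely the viscosity supersolution property of Proposition~\ref{th:corner} up to the corner set $\gamma$ together with the fact that $\xi\in J_\Gamma u(x_0)$ makes $x_0$ a minimizer of $u-\varphi$ over $\Gamma$.
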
 

\begin{proof}
Recall that $u$ is continuous up to the boundary and by Proposition~\ref{th:corner} it is a viscosity supersolution of~\eqref{eq:neumann-0} in the sense of 
Definition~\ref{def:visco}.
Fix $\xi \in \mathcal B^\lambda_{u_\Gamma}$. Then $\xi\in J_\Gamma u(x_0)$, for some $x_0\in\Gamma$, $|\xi|<1$ and $\xi\cdot\nu_\Co(x_0)>\lambda$. We need to show that $\xi \in \nabla u(\widehat \Om)$, which by~\eqref{omegahat} means that $\xi \in J_{\Om}u(\bar x)$ for some $\bar x \in \Om$.

First we claim that $\xi\not\in J_{\overline\Om}u(x_0)$. Indeed, assume the opposite, that is
\[
u(y)\geq u(x_0) + \xi \cdot (y- x_0)=: \varphi(y) \qquad \text{for all } \, y \in \overline \Omega.
\]
In particular, $x_0$ is the minimum point of $u - \varphi$ and since $u$ is a viscosity supersolution and $\nabla \varphi(x_0) = \xi$, see Definition~\ref{def:visco}, we have
\[
\begin{cases}
\xi \cdot \nu_\Gamma( x_0)+\lambda \geq 0 & \text{if } \, x_0\in \Gamma\setminus \gamma,\\
\max\{ \xi \cdot \nu_\Sigma( x_0) -1, \xi \cdot \nu_\Gamma( x_0)+\lambda \}\geq 0 & \text{if } x_0\in \gamma.
\end{cases}
\]
From the above condition, since $|\xi|<1$ we have that $ \xi \cdot \nu_\Gamma( x_0)+\lambda\geq0$, which is impossible since $\xi \in \mathcal B^\lambda_{u_\Gamma}$ and so 
$\xi\cdot\nu_\Gamma( x_0)=-\xi\cdot\nu_\Co( x_0)<-\lambda$. Therefore $\xi\not\in J_{\overline\Om}u(x_0)$. 

By the above it holds that the inequality $u(y)\geq \varphi(y)$ is not true for all $y\in\overline\Omega$. This means that $\bar c=\min_{y\in\overline\Omega}\,(u(y)-\varphi(y))<0$. In turn we have that the graph of $\varphi+\bar c$ touches from below the graph of $u$ at some point $\bar x\in\overline\Om$. Clearly $\bar x\not\in\Gamma$, since $\xi\in J_\Gamma u(x_0)$ and $\bar c<0$ imply $\varphi(y)+\bar c<u(y)$ for all $y\in\Gamma$. On the other hand, if $\bar x\in \Sigma$, again by the supersolution property, we would have that $\xi\cdot\nu_\Sigma(\bar x)\geq1$, which is impossible as $|\xi|<1$. Therefore $\bar x\in\Omega$, which implies $\xi=\nabla u(\bar x)$, as desired. 
\end{proof}

\section{The main geometric estimate}\label{sec:maingeo}

By Lemma~\ref{lem:u-gamma} we know that if 
$
|\mathcal B_{u_{\Gamma}}^\lambda |\geq |B^\lambda|,
$
where $u_\Gamma$ is the restriction of $u $ on $\Gamma$ and $\mathcal B_{u_{\Gamma}}^\lambda$ is as in ~\eqref{blambdau}, with $v=u_\Gamma$, then we have inequality~\eqref{eq:crucial-est}. 
As we will show in this section, the above estimate on restricted subdifferentials holds in fact for generic continuous functions $v: K \to \R$ , where $K$ is any compact subset of $\Co$ and not just for $u_\Gamma$. This in turn means that such a property pertains solely to the convexity of the set $\Co$. Precisely, we will show the following. 
\begin{theorem}\label{thm:maingeo}
Let $\Co\subset\R^N$ be any closed convex set with nonempty interior and of class $C^1$ and let $\lambda\in (-1, 1)$. If $K\subset\pa\Co$ is compact and $v:K\to\R$ is any continuous function, then 
\beq\label{eq:maingeo2}
|\mathcal B^\lambda_{v} |\geq|B^\lambda|,
\eeq
where $B^\lambda=\{x\in B_1:\,x\cdot e_N>\lambda\}$ and $\mathcal B^\lambda_{v}$ is defined in~\eqref{blambdau}.
\end{theorem}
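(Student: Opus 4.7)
The plan is to reduce \eqref{eq:maingeo2} to a finite-dimensional geometric inequality and then to establish the latter by induction on the number of cells, crucially exploiting the half-line property.

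\textbf{Reduction to the finite case.} Following the discretization strategy advertised in the introduction (Lemma~\ref{lem:ABP}), I would approximate $K$ by a finite $\e$-net $K_\e = \{x_1,\ldots,x_k\}\subset K$ and work with $A_i := J_{K_\e} v(x_i)$. By Remark~\ref{rm:ovv} the sets $\{A_i\}$ form a convex partition of $\R^N$ (up to negligible sets). Setting $\nu_i := \nu_\Co(x_i)$, the convexity of $\Co$ gives $\nu_i\cdot(x_j-x_i)\le 0$ for every $j$, and therefore for $\xi\in A_i$ and $t\ge 0$
\[
(\xi+t\nu_i)\cdot(x_j-x_i) \;\le\; \xi\cdot(x_j-x_i) \;\le\; v(x_j)-v(x_i),
\]
so $\xi+t\nu_i\in A_i$: the half-line property holds. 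The statement thus reduces to the combinatorial inequality \eqref{introcr1} for an arbitrary convex partition $\{A_i\}_{i=1}^k$ of $\R^N$ endowed with unit vectors $\{\nu_i\}_{i=1}^k$ satisfying $A_i+\R_+\nu_i\subset A_i$.

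\textbf{Induction on $k$.} I would argue by induction on the number of cells. The base case $k=1$ is immediate: $A_1=\R^N$ and the cap $\{\xi\in B_1:\xi\cdot\nu_1>\lambda\}$ has volume $|B^\lambda|$ by rotational invariance. For the inductive step I would fix two adjacent cells $A_i$, $A_j$ (separated by a hyperplane orthogonal to $x_j-x_i$ by Remark~\ref{rm:ovv}) and design a continuous homotopy that rotates $\nu_j$ onto $\nu_i$ while deforming the separating facet, keeping each cell convex and preserving its half-line property throughout. At the terminal time the two cells share a common normal and can be merged into a single convex cell, so the number of cells drops to $k-1$. Representing each cell by its convex lower semicontinuous height function $\phi_\ell\colon\nu_\ell^\perp\to(-\infty,+\infty]$ with $A_\ell=\{z+t\nu_\ell:t\ge\phi_\ell(z)\}$, its contribution reads
\[
|A_\ell\cap B_1\cap\{\xi\cdot\nu_\ell>\lambda\}| \;=\; \int_{\nu_\ell^\perp}\bigl(\sqrt{1-|z|^2}-\max(\lambda,\phi_\ell(z))\bigr)_+\,dz,
\]
and one has to show that the total $\sum_\ell$ of these quantities does not increase along the homotopy.

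\textbf{Main obstacle.} The delicate point is exactly this monotonicity, which is essentially a one-parameter Brunn--Minkowski / Steiner-symmetrization-type inequality. Restricting attention to the $2$-plane spanned by $\nu_i,\nu_j$ and slicing by orthogonal fibers, the monotonicity reduces to a planar assertion comparing two circular caps of equal area, whose axes are rotating toward each other, against two convex domains each extending unboundedly in its own axis direction; the half-line property supplies the one-sidedness needed for such a comparison. The principal technical hurdle is the case analysis when the rotating cap sweeps past the edge of its hosting cell, because the support of the integrand above then changes and boundary terms appear; a careful continuity/compactness argument on the admissible partitions should close the gap. The case $\lambda=0$ is much easier because the symmetry $\xi\mapsto-\xi$ pairs each point of $A_i\cap\{\xi\cdot\nu_i<0\}$ with one of $A_i\cap\{\xi\cdot\nu_i>0\}$, yielding the inequality without any rotation.
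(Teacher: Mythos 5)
Your reduction to a finite partition with the half-line property, and your treatment of $\lambda=0$ by reflection, match the paper exactly (Lemma~\ref{lem:ABP}, Lemma~\ref{lem:v-gamma}, and the displayed argument giving~\eqref{0ABPcrucial}). For $\lambda\neq 0$, however, your proposal diverges from the paper and, more importantly, it has a genuine gap at the decisive step. You propose an induction on the number of cells $k$, merging two adjacent cells by a homotopy that rotates $\nu_j$ onto $\nu_i$ while deforming the common facet, and you assert that the total quantity $\sum_\ell|A_\ell\cap B_1\cap\{\xi\cdot\nu_\ell>\lambda\}|$ is monotone along this homotopy. This is precisely the content that would need to be proved, and you explicitly leave it open, calling it the ``main obstacle'' and hoping a ``careful continuity/compactness argument should close the gap.'' There is no indication of why this symmetrization-type monotonicity should hold: it is not a consequence of Brunn--Minkowski or Steiner symmetrization in any standard form (the constraint is not a fixed convex body but a moving union of cap-like slabs with different axes), and the deformation of the separating facet is not specified, so the claimed planar reduction is not actually set up. In addition, the inductive framework quietly requires that the merged cell again be the subdifferential cell of a function on two points of $\partial\Co$ (or at least a convex set with the half-line property for a \emph{single} normal), and your homotopy must preserve that structure at every time, which is also not addressed. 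As written, the proposal records a plausible strategy but does not constitute a proof.

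The paper proceeds quite differently and avoids any induction on cells. It first observes that the volume inequality follows from the corresponding surface inequality on every sphere $\partial B_\varrho$, reducing to an estimate of the form~\eqref{tesi}. It then handles $\lambda<0$ and $\lambda>0$ separately by introducing a radial projection $\Psi$ (resp.\ $\Psi^+$) that pushes points of $S_\lambda\subset\partial B_1$ along the half-line $t\mapsto\xi\pm t\nu(\xi)$ onto the smaller sphere $\partial B_r$ with $r=\sqrt{1-\lambda^2}$. The half-line property guarantees $\nu$ is constant along this ray, so the image lands in $\partial B_r^{\mp}$, whose $\mathcal H^{N-1}$-measure is controlled by the reflection ($\lambda=0$) estimate~\eqref{lambda=0bis}. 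The Jacobian of $\Psi$ is computed explicitly; applying the area formula and the layer-cake decomposition expresses $\mathcal H^{N-1}(\Psi(S_\lambda))$ as an integral of $\mathcal H^{N-1}(S_\lambda)-\mathcal H^{N-1}(S_{\ell(t)})$, which interacts with a ``first-failure'' extremality assumption to produce a contradiction (the crucial numerical identity $\int_1^\infty(\varphi(\lambda)-\varphi(\ell(t)))\,dt+\varphi(\lambda)=\tfrac12\mathcal H^{N-1}(\partial B_r)$ is extracted for free by running the same computation for the trivial one-point partition). This continuation-plus-projection argument is a global, non-inductive mechanism that entirely sidesteps the need to compare $k$- and $(k-1)$-cell partitions, which is exactly where your proposal stalls. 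If you want to salvage the inductive route you would have to both (a) specify the facet deformation precisely and (b) prove the monotonicity, neither of which is obvious; the paper's device is, in effect, the discovery that replaces that missing lemma.
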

It turns out that the crucial property in establishing the above estimate is the following \emph{half-line property} satisfied by the subdifferentials of functions defined on subsets of $\pa \Co$, which is a consequence of the mere convexity of $\Co$. 
\begin{lemma}
\label{lem:v-gamma}
Let $\Co\subset\R^N$ be a closed convex set with nonempty interior and of class $C^1$, let $K \subset \partial \Co$ and let $v : K \to \R$. 
If $\xi\in J_K v(x)$ for some $x\in K$, then 
\[
\xi+t\nu_\Co(x)\in J_K v(x) \qquad \text{ for all } \, t>0.
\]
\end{lemma}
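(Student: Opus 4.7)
The plan is to reduce the claim directly to the supporting hyperplane characterization of the outer normal at a boundary point of a convex set. There is no real PDE or measure-theoretic content here: only the definition of $J_K v(x)$ and the geometric meaning of $\nu_\Co(x)$.

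First, I would recall that since $\Co$ is convex with nonempty interior and of class $C^1$, the hyperplane passing through $x\in\partial\Co$ with outer normal $\nu_\Co(x)$ is a supporting hyperplane to $\Co$. This means $\nu_\Co(x)\cdot(y-x)\le 0$ for every $y\in\Co$, and in particular for every $y\in K\subset\partial\Co\subset\Co$. This is the only place where convexity of $\Co$ is used, and it is precisely the geometric content behind the so-called half-line property.

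Next, starting from the hypothesis $\xi\in J_K v(x)$, i.e.\ $v(y)-v(x)\ge \xi\cdot(y-x)$ for all $y\in K$, I would add the nonpositive quantity $t\,\nu_\Co(x)\cdot(y-x)$ (with $t>0$) to the right-hand side. By the supporting hyperplane inequality just recalled, this only weakens the bound, so
\[
v(y)-v(x)\ge \xi\cdot(y-x)\ge \xi\cdot(y-x)+t\,\nu_\Co(x)\cdot(y-x)=(\xi+t\nu_\Co(x))\cdot(y-x)
\]
for every $y\in K$. By the definition of $J_K v(x)$, this gives $\xi+t\nu_\Co(x)\in J_K v(x)$, as required.

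I do not anticipate any obstacle: the assertion is essentially the statement that adding an outward normal to a subgradient at a boundary point of a convex body preserves the subgradient property, and the proof is a one-line consequence of the supporting hyperplane inequality. The $C^1$ assumption on $\Co$ is only used to make $\nu_\Co(x)$ unambiguously defined; no further regularity of $K$ or $v$ is required.
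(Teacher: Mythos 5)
Your proof is correct and uses exactly the same argument as the paper: the supporting hyperplane inequality $\nu_\Co(x)\cdot(y-x)\le 0$ for $y\in\Co$ combined with the defining inequality of $J_K v(x)$. There is nothing to add.
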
 
\begin{proof}
If $\xi\in J_K v(x)$, then
\[
v(y) - v(x) \geq \xi \cdot (y-x) \qquad \text{for all } \, y \in K.
\]
By convexity it holds $\nu_\Co(x) \cdot (y - x) \leq 0$ for all $y \in K$. Therefore for any $t>0 $ it holds 
\[
v(y) - v(x) \geq (\xi + t \nu_\Co(x) ) \cdot (y-x) \qquad \text{for all } \, y \in K,
\]
that is $\xi + t \nu_\Co(x) \in J_K v(x)$. 
\end{proof}
The next useful observation is that by discretization we can reduce the proof of~\eqref{eq:maingeo2} to the case of functions~$v$ defined on finite subsets of $\pa \Co$, as shown in the following lemma. 
To this aim we recall that a sequence $\{C_n\}$ of closed sets of $\R^N$ converges in the {\em Kuratoswki sense} to a closed set $ C$ if the following conditions are satisfied:
\begin{itemize}
\item[(i)] if $x_n\in C_n$ for every $n$, then any limit point of $\{x_n\}$ belongs to $ C$;
\item[(ii)] any $x\in C$ is the limit of a sequence $\{x_n\}$ with $x_n\in C_n$.
\end{itemize}
One can easily see that $C_n\to C$ in the sense of Kuratowski if and only if dist$(\cdot, C_n)\to$ dist$(\cdot, C)$ locally uniformly in $\R^N$. 

\begin{lemma}\label{lem:ABP}
Let $\Co\subset\R^N$ be a closed convex set with nonempty interior and with $C^1$ boundary, and fix $\lambda\in (-1, 1)$. Assume that~\eqref{eq:maingeo2} holds for all $v:K\to \R$, whenever $K\subset\pa\Co$ is finite. Then, \eqref{eq:maingeo2} holds for all continuous $v:K\to \R$, whenever $K\subset\pa\Co$ is compact.
\end{lemma}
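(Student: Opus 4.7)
The plan is to approximate the compact set $K$ by a sequence of finite subsets and pass to the limit via the reverse Fatou lemma, with a small shift in the parameter $\lambda$ to handle the strict inequalities built into the definition of $\mathcal B^\lambda_v$.

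For each $n\geq1$, I would use compactness of $K$ to extract a finite subset $K_n\subset K$ such that $\max_{y\in K}\dist(y,K_n)\leq 1/n$ (take a finite subcover of $\{B_{1/n}(x):x\in K\}$). Since $K_n\subset K$ and $K$ is closed, condition (i) in the definition of Kuratowski convergence is immediate; condition (ii) follows from the uniform bound $\dist(y,K_n)\leq 1/n$. Hence $K_n\to K$ in the Kuratowski sense. Set $v_n:=v|_{K_n}$. By hypothesis $|\mathcal B^\mu_{v_n}|\geq|B^\mu|$ for every $n$ and every $\mu\in(-1,1)$.

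The core step is the inclusion
\[
\limsup_n \mathcal B^{\lambda'}_{v_n}\subset \mathcal B^\lambda_v,
\]
valid for any $\lambda'\in(\lambda,1)$. Take $\xi$ in the left-hand side: along a subsequence there exist $x_n\in K_n$ with $\xi\in J_{K_n}v_n(x_n)$, $|\xi|<1$ and $\xi\cdot\nu_\Co(x_n)>\lambda'$. Extract a further subsequence with $x_n\to\bar x\in K$. For any $y\in K$, Kuratowski convergence provides $y_n\in K_n$ with $y_n\to y$; passing to the limit in $v(y_n)-v(x_n)\geq\xi\cdot(y_n-x_n)$ via the continuity of $v$ yields $v(y)-v(\bar x)\geq\xi\cdot(y-\bar x)$, i.e., $\xi\in J_K v(\bar x)$. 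The $C^1$ regularity of $\pa\Co$ gives continuity of $\nu_\Co$, so $\xi\cdot\nu_\Co(x_n)>\lambda'$ passes to $\xi\cdot\nu_\Co(\bar x)\geq\lambda'>\lambda$, while $|\xi|<1$ is preserved. Therefore $\xi\in\mathcal B^\lambda_v$.

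To conclude, since all $\mathcal B^{\lambda'}_{v_n}$ lie in the bounded set $B_1$, the reverse Fatou lemma applies and gives
\[
|\mathcal B^\lambda_v|\geq|\limsup_n \mathcal B^{\lambda'}_{v_n}|\geq\limsup_n |\mathcal B^{\lambda'}_{v_n}|\geq|B^{\lambda'}|.
\]
Letting $\lambda'\searrow\lambda$ and using the monotone convergence $|B^{\lambda'}|\nearrow|B^\lambda|$ of spherical cap volumes yields the claim. The only subtle point is the handling of the strict inequalities in the definition of $\mathcal B^\lambda_v$: a direct limsup with $\lambda'=\lambda$ would only land in the closure of $\mathcal B^\lambda_v$, and one would then have to argue separately that the union of slices $\{\xi\cdot\nu_\Co(x)=\lambda\}$ over $x\in K$ is Lebesgue-negligible; shifting to $\lambda'>\lambda$ bypasses this issue entirely.
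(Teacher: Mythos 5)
Your discretization via nets, the Kuratowski-convergence argument, the limsup inclusion $\limsup_n \mathcal B^{\lambda'}_{v_n}\subset\mathcal B^\lambda_v$ for $\lambda'>\lambda$, and the reverse Fatou step all match the paper's proof and are correct. However, the final chain contains a genuine gap. You write $\limsup_n |\mathcal B^{\lambda'}_{v_n}|\geq|B^{\lambda'}|$, justifying it by ``By hypothesis $|\mathcal B^\mu_{v_n}|\geq|B^\mu|$ for every $\mu\in(-1,1)$.'' That is not the hypothesis of the lemma: $\lambda$ is fixed once and for all, and the assumption is only that $|\mathcal B^{\lambda}_{w}|\geq|B^{\lambda}|$ for finite $K$ and that particular $\lambda$. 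There is no scaling or monotonicity that converts the hypothesis at $\lambda$ into the hypothesis at a nearby $\lambda'>\lambda$; in fact the trivial monotonicity goes the wrong way, since $\mathcal B^{\lambda'}_{v_n}\subset\mathcal B^{\lambda}_{v_n}$.

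The paper closes exactly this gap with a quantitative substitute: a uniform Lipschitz estimate $|\mathcal B^{\lambda'}_{v_n}|\geq|\mathcal B^{\lambda}_{v_n}|-C(\lambda'-\lambda)$ with $C$ independent of $n$, proved by showing that the slice areas $\H^{N-1}(A_t\cap B_1)$ are bounded uniformly via the half-line property (Lemma~\ref{lem:v-gamma}, i.e.\ the convexity of $\Co$) and an averaging/mean-value argument. Combined with the hypothesis at the single $\lambda$, this gives $\limsup_n|\mathcal B^{\lambda'}_{v_n}|\geq |B^\lambda|-C(\lambda'-\lambda)$, and then $\lambda'\searrow\lambda$ finishes the proof. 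Your argument would be complete if the hypothesis were strengthened to hold for all $\lambda'$ in a right-neighborhood of $\lambda$ (which, as it happens, is what the paper ultimately establishes for finite sets before invoking this lemma), but as a proof of the lemma with the stated hypothesis it is missing the uniform Lipschitz control, which is the nontrivial content of the paper's proof.
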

\begin{proof}
Fix a compact subset $K$ of $\pa\Co$ and a continuous function $v : K \to \R$. Let $K_n\subset K$ be a sequence of discrete sets such that $K_n\to K$ in the Kuratowski sense. We define the function $v_n : K_n \to \R$ as the restriction of $v$ on $K_n$, i.e., $v_n(x)= v(x)$ for $x\in K_n$. 

We claim that 
\beq\label{eq:bella1.5}
\Chi{\mathcal B^{\lambda}_{v}}\geq \limsup_n \Chi{\mathcal B^{\lambda'}_{v_n}}\quad \text{pointwise in $B_1$ }\qquad \forall \lambda'\in (\lambda, 1).
\eeq
We argue by contradiction, by assuming that for some $\lambda'\in ( \lambda, 1)$ and for some $\xi\in B_1\setminus \mathcal B^{\lambda}_{v}$, along a (non relabelled) subsequence $\xi\in \mathcal B^{\lambda'}_{v_n}$; i.e., there exist $x_n\in K_n$ such that $\xi\in J_{K_n}v_n(x_n)$ and $\xi\cdot\nu_{\Co}(x_n)>\lambda'$. Up to extracting a further (non-relabelled) subsequence we may assume that $x_n\to x$ for some $x\in K$. Clearly, we also have $\xi\cdot\nu_{\Co}(x_n)\to \xi\cdot\nu_{\Co}(x)$, so that 
\beq\label{eq:bella3}
\xi\cdot\nu_{\Co}(x)\geq\lambda'.
\eeq
Fix now $y\in K$. Then, by Kuratowski convergence there exists a sequence $y_n\to y$ such that $y_n\in K_n$ for all $n$. Since $\xi\in J_{K_n}v_n(x_n)$, we have 
\[
v_n(y_n)\geq v_n(x_n)+\xi\cdot (y_n-x_n)\,. 
\]
Passing to the limit and noticing that $v_n(x_n)\to v(x)$ and $v_n(y_n)\to v(y)$, we infer $\xi\in J_{\Gamma}v(x)$. Recalling~\eqref{eq:bella3}, we get in particular $\xi\in \mathcal B^{\lambda}_{v}$, which is a contradiction. This establishes~\eqref{eq:bella1.5}, which in turn implies 
\beq\label{eq:bella2}
|\mathcal B^{\lambda}_{v}|\geq \limsup_n |\mathcal B^{\lambda'}_{v_n}| \qquad \forall \lambda'\in (\lambda, 1),
\eeq
by Fatou's lemma. 

By assumption for any $n$ we have the estimate 
\beq\label{eq:bella}
|\mathcal B^\lambda_{v_n} |\geq|B^{\lambda}|. 
\eeq 
Let us then prove that there exists a constant $C$ independent of $n$ such that 
\beq\label{eq:brutta}
|\mathcal B^{\lambda'}_{v_n} | \geq |\mathcal B^\lambda_{v_n} | - C(\lambda' - \lambda) \qquad\text{for all $\lambda'\in (\lambda, \lambda+\delta)$,}
\eeq
where $\delta=(1-\lambda)/4$.
For any $n$ and $t \in (0, 1-\lambda)$ we define $A_t$ (dropping the dependence on $n$) as
\[
A_t :=\bigcup_{x\in K_n} \{ \xi \in J_{K_n} v_n(x) : \,\xi\cdot\nu_\Co(x)= \lambda+t \}
\]
and we set for $r>0$
\[
\mathcal B_{v_n, r}^{t} :=\bigcup_{x\in K_n} \mathcal B_{v_n, r}^{t}(x)=\bigcup_{x\in K_n}\{ \xi \in J_{K_n} v_n(x) : \,|\xi|<r\text{ and }\xi\cdot\nu_\Co(x)> t\}.
\]
For any $x\in K_n$ the set $ \mathcal B_{v_n, r}^{t}(x)$ is the intersection of a fixed bounded set with the half space $\{\xi\cdot\nu_\Co(x)> t\}$. As a consequence the function $t \mapsto |\mathcal B^{\lambda+t}_{v_n, r} |$ is decreasing and Lipschitz continuous (with constant possibly depending on the cardinality of $K_n$). Moreover, at points of differentiability it holds 
\beq\label{eq:brutta2}
\frac{d}{d t} |\mathcal B^{\lambda+t}_{v_n, r} | = - \H^{N-1}(A_t \cap B_r) 
\eeq
for all $r>0$. 

The idea is to show that in fact the Lipschitz constant is independent of $n$. 
To this aim, we fix $x \in K_n$ and study the set 
\[
A_{t}(x) = \{ \xi \in J_{K_n} v_n(x) : \,\xi\cdot\nu_\Co(x)= \lambda+t \}. 
\]
Assume that $\xi \in A_{t}(x)$, so in particular $ \xi \in J_{K_n} v_n(x) $. By Lemma~\ref{lem:v-gamma} for every $s>0$ it holds 
$\xi + s \nu_\Co(x) \in J_{K_n} v_n(x)$, so $\xi + s \nu_\Co(x)\in A_{t+s}(x)$. Therefore for every $s>0$ it holds 
\[
A_{t}(x) + s \nu_\Co(x) \subset A_{t+s}(x) .
\]
Then, it holds $\H^{N-1}( A_{t}(x) \cap B_{1+t}) \leq \H^{N-1}( A_{t+s}(x) \cap B_{1+t+s}) $. Therefore, the function 
\[
t \mapsto \H^{N-1}(A_t \cap B_{1+t}) = \sum_{x \in K_n} \H^{N-1}( A_{t}(x) \cap B_{1+t})
\]
is non-decreasing. Notice that the equality holds true since for any $x,x'\in K_n$ one has $\H^{N-1}(A_t(x)\cap A_t(x'))=0$. Indeed by Remark~\ref{rm:ovv} $A_t(x)\cap A_t(x')\subset J_{K_n}v_n(x)\cap J_{K_n}v_n(x')$ is contained in a hyperplane orthogonal to $x'-x$ and neither $\nu_{\Co}(x)$ nor $\nu_{\Co}(x')$ can be parallel to $x'-x$. Integrating~\eqref{eq:brutta2} we have 
\[
\int_{\delta}^{2\delta} \H^{N-1}(A_t \cap B_{1+t})\, dt \leq \int_{\delta}^{2\delta} \H^{N-1}(A_t \cap B_{1+2\delta})\,dt =- \int_{\delta}^{2\delta} \frac{d}{dt} |\mathcal B^{\lambda+t}_{v_n, 1+2\delta}| \, dt \leq |B_2|.
\]
By the mean value theorem there is $\hat t \in [\delta,2\delta]$ such that $\H^{N-1}(A_{\hat{t}} \cap B_{1+\hat{t}}) \leq C=2^{N+2}\omega_N/(1-\lambda)$.
In turn, using the monotonicity obtained above, we have 
\[
\H^{N-1}(A_t \cap B_{1}) \leq C \qquad \text{for all }\, t \in [0,\delta].
\]
The claim~\eqref{eq:brutta} then follows by integrating~\eqref{eq:brutta2} from $t=0$ to $t = \lambda'-\lambda$. 
Finally the statement of the lemma follows from~\eqref{eq:bella2}, \eqref{eq:bella} and~\eqref{eq:brutta} and letting $\lambda' \to \lambda$. \end{proof}

Thanks to the previous result we are reduced to consider discrete functions defined on finite subsets $K\subset\pa\Co$. In this situation, there are only finitely many subdifferentials $J_Kv(x)$ and it is also immediate that they are essentially disjoint, i.e., they have disjoint interiors. Hence, by Remark ~\ref{rm:ovv} and Lemma~\ref{lem:v-gamma} the subdifferentials $J_Kv(x)$ for $x \in K$ form a finite partition of the space $\R^N$ made of sets having the half-line property. As we will see, Theorem~\ref{thm:maingeo} follows from these properties only, but the argument is rather involved in the general case. However, the situation becomes particularly simple in the Choe-Ghomi-Ritor\'e case $\lambda=0$.

\begin{proof}[\bf Proof of Theorem~\ref{thm:maingeo} for $\lambda=0$]
By Lemma~\ref{lem:ABP} it is enough to consider functions defined on finite sets. Let $K=\{x_1, \dots, x_n\}$ be any finite subset of $\pa\Co$ and let $v:K\to \R$. Recall that $\displaystyle\bigcup_{i=1}^nJ_K v(x_i)=\R^N$ and that the sets $J_K v(x_i)$ have disjoint interiors. Moreover Lemma~\ref{lem:v-gamma} implies that they have the half-line property
\beq\label{recall}
\xi\in J_K v(x_i)\Longrightarrow \xi+t\nu_{\Co}(x_i)\in J_K v(x_i) \text{ for all $t>0$.}
\eeq


Now, up to a set of Lebesgue measure zero, we may split $J_K v(x_i)=J_K v(x_i)^+\cup J_K v(x_i)^-$, where
\[
J_K v(x_i)^\pm:=\{\xi\in J_K v(x_i):\, \pm\, \xi\cdot\nu_{\Co}(x_i)>0 \}.
\]
Let us then fix $\xi\in J_K v(x_i)^-$ and denote $\tau = -\xi \cdot \nu_{\Co}(x_i)>0$. Then by~\eqref{recall} the symmetric point $\hat\xi = \xi + 2 \tau \nu_{\Co}(x_i)$ belongs to $J_K v(x_i)^+$ and has the same norm as $\xi$. Hence 
$|J_K v(x_i)^+\cap B_1|\geq \frac12 |J_K v(x_i)\cap B_1|$. In turn,
\beq\label{0ABPcrucial}
|\mathcal B^0_{v}|=\sum_{i=1}^n|J_K v(x_i)^+\cap B_1|\geq \frac12\sum_{i=1}^n |J_K v(x_i)\cap B_1|=\frac12 |B_1|,
\eeq
which is the desired estimate.
\end{proof}

\begin{remark}\label{rm:choeghomiritore}
Combining the previous proof with the ABP argument skeched in the introduction and rigorously developed in the next section, we recover an ABP-proof of the relative isoperimetric inequality outside convex sets obtained by Choe, Ghomi and Ritor\'e in~\cite{ChGhoRi07}. Note that an ABP-argument for the same inequality has been already provided in~\cite{LiuWangWeng}. However, in their argument our crucial estimate~\eqref{0ABPcrucial} is replaced by a geometric estimate based on normal cones and inspired by the techniques in~\cite{ChGhoRi06}, see~\cite[Proposition~2.4]{LiuWangWeng}.
\end{remark}

We turn to the general case. To this aim let $K=\{x_1, \dots, x_n\}$ be any finite subset of $\Co$ and let $v:K\to \R$ be any function. It is obvious that Theorem~\ref{thm:maingeo} follows if we show that 
$$
\H^{N-1}\Big(\bigcup_{x\in K} \{\xi\in\pa B_\varrho\cap J_Kv(x) : \, \xi\cdot\nu_\Co(x)> \lambda \}\Big)
\geq \H^{N-1}\big(\{\xi\in \pa B_\varrho: \xi\cdot e_N>\lambda\}\big), 
$$
for all $\varrho \in(0,1)$. By rescaling, this inequality is equivalent to
\beq\label{prop:lambdaABP2.1}
\H^{N-1}\Big(\bigcup_{x\in K} \{\xi\in\pa B_1\cap J_Kv_\varrho(x) : \, \xi\cdot\nu_\Co(x)> \lambda' \}\Big)
\geq \H^{N-1}\big(\{\xi\in \pa B_1: \xi\cdot e_N>\lambda'\}\big),
\eeq
where we have set $\lambda'=\frac{\lambda}{\varrho}\in\R$, $v_\varrho=\frac{v}{\varrho}$.
Note that ~\eqref{prop:lambdaABP2.1} is trivially satisfied if $\lambda'\geq1$ or $\lambda'\leq-1$. 

Therefore, we are ultimately bound to show that for any function $v:K\to\R$ and any finite set $K\subset\pa\Co$ we have
\beq\label{tesi}
\H^{N-1}\Big(\bigcup_{x\in K} \{\xi\in\pa B_1\cap J_Kv(x) : \, \xi\cdot\nu_\Co(x)> \lambda\}\Big)
\geq \H^{N-1}\big(\{\xi\in \pa B_1: \xi\cdot e_N>\lambda\}\big).
\eeq
In the case $\lambda<0$, it will be in fact be convenient to rewrite the previous inequality as 
\beq\label{prop:lambdaABP3}
\H^{N-1}\Big(\bigcup_{x\in K} \{\xi\in\pa B_1\cap J_Kv(x) : \, \xi\cdot\nu_\Co(x)< \lambda\}\Big)
\leq \H^{N-1}\big(\{\xi\in \pa B_1: \xi\cdot e_N< \lambda\}\big)
\eeq
as $\H^{N-1}\Big(\bigcup_{x\in K} \{\xi\in\pa B_1\cap J_Kv(x) : \, \xi\cdot\nu_\Co(x)= \lambda\}\Big)=0$.
\begin{proof}[\bf Proof of Theorem~\ref{thm:maingeo}] From the previous discussion it is enough to consider functions defined on finite subsets of $\pa\Co$. Let us fix $K= \{x_1, x_2, \dots x_n\} \subset \pa\Co$ and $v: K\to \R$. Then, keeping in mind Remark~\ref{rm:ovv}, we have that the sets $A_i$ defined as
\[
A_i := \text{int}\{\xi \in \R^{N} : \xi \in J_K v (x_i) \} 
\] 
are disjoint and 
\beq\label{vanishing0}
\cN:=\R^N\setminus \cup_{i}^nA_i \text{ satisfies } |\cN|=0.
\eeq
Moreover, as they are obtained as a finite intersection of open half-spaces, we have
\beq\label{vanishing}
\H^{N-1}(\pa A_i\cap \pa B_{\varrho})=0 \qquad\text{for all $i=1,\dots, n$ and all $\varrho>0$.}
\eeq
Thus, for any $ \xi \in \cup_{i}A_i$ we can define the associated normal 
$$
\nu(\xi) := \nu_\Co(x_i),
$$
where $i$ is such that $ \xi \in A_i$. Recall that by the half-line property stated in Lemma~\ref{lem:v-gamma}, we have
\beq\label{halfline}
\xi\in A_i\Rightarrow \xi+t\nu(\xi)\in A_i\quad\text{ for all }t>0.
\eeq

For $\lambda \in (-1,1)$ denote $r= r(\lambda): = \sqrt{1 -\lambda^2}$ and define 
\[
\pa B_r^- = \{ \xi \in \pa B_r\setminus \cN : \xi \cdot \nu(\xi) < 0\} \quad \text{and} \quad \pa B_r^+ = \{ \xi \in \pa B_r\setminus \cN : \xi \cdot \nu(\xi) > 0\}
\]
Now observe that, by the half-line property, for every $\xi\in \R^N\setminus \cN$ we have
\beq\label{newone}
\nu\big(\xi+t\nu(\xi)\big) = \nu(\xi)\qquad \forall t>0\,.
\eeq
In particular, if $\xi\in \pa B_r^-$, then the symmetric point $\hat \xi:=\xi-2 (\xi\cdot \nu(\xi))\nu(\xi)\in \pa B_r^+$. Thus, $\H^{N-1}(\pa B_r^-)\leq \H^{N-1}(\pa B_r^+)$ and in turn, since $\H^{N-1}( \{ \xi \in \pa B_r\setminus \cN : \xi \cdot \nu(\xi) = 0\})=0$, we conclude that 
\beq\label{lambda=0bis}
\H^{N-1}(\pa B_r^-)\leq \frac 12\H^{N-1}(\pa B_r)\leq \H^{N-1}(\pa B_r^+).
\eeq
We now distinguish the two cases $\lambda<0$ and $\lambda>0$. 

\noindent{\bf The case $\lambda< 0$.} 
Let us set
\[
S_{\lambda} := \{ \xi \in \pa B_1\setminus \cN : \xi \cdot \nu(\xi) < \lambda\}. 
\]
and
\[
\varphi(\lambda) :=\Ha^{N-1}\big(\{ \xi \in \pa B_1 : \xi \cdot e_N < \lambda\}\big).
\]
By the discussion above it is enough to prove~\eqref{prop:lambdaABP3}, which may be rewritten as 
\[
\H^{N-1}(S_\lambda)\leq \varphi(\lambda)\,\qquad \text{ for all }\lambda \in (-1,0).
\]
In fact, as mentioned in the introduction, we will show that the above inequality holds for any finite collection of open sets $(A_i)$ satisfying 
\eqref{vanishing0}, \eqref{vanishing}, and~\eqref{halfline}.

Let us fix $\eps>0$ and prove that for all $\lambda \in (-1,0)$ it holds 
\begin{equation} \label{eq:to-show-eps}
\Ha^{N-1}\big(S_{\lambda} \big) < \varphi(\lambda)+\e .
\end{equation}
By continuity~\eqref{eq:to-show-eps} holds when $\lambda$ is close to $-1$. Let us assume by contradiction that there exists $\lambda <0$ such that 
\beq\label{contra}
\Ha^{N-1}\big(S_{\lambda} \big) = \varphi(\lambda)+\e \quad\text{and}\quad \Ha^{N-1}\big(S_{\lambda'} \big) < \varphi(\lambda')+\e\, \text{ for all }\lambda'\in [-1, \lambda).
\eeq

We define the map $\Psi : S_{\lambda} \to \pa B_r^-$, 
\[
\Psi(\xi) = \xi + \big( -\sqrt{r^2 -1 + (\xi \cdot \nu(\xi))^2} -(\xi \cdot \nu(\xi)) \big) \, \nu(\xi). 
\]
The geometric meaning of the map is the following: let $\sigma_\xi$ be the half-line $\{\xi+t\nu(\xi):\, t>0\}$. Then $\xi\in S_\lambda$ if and only if $\sigma_\xi$ intersects $\pa B_r$ at two points (see Figure~\ref{cdotandr}), and in this case $\Psi(\xi)$ is the intersection point closer to $\xi$. 

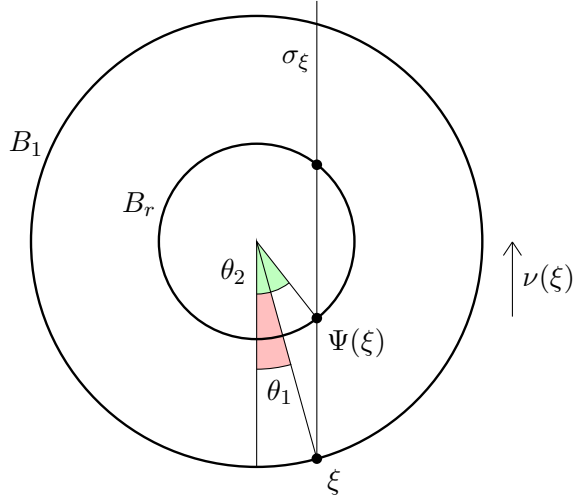
\begin{figure}[htbp]
\begin{tikzpicture}[>=>>>>]
\fill[red!25] (0,-1.7) arc(-90:-74.3:1.7) -- (0,0) -- (0,-1.7);
\fill[green!25] (0,-.7) arc (-90:-52:.7) -- (0,0) -- (0,-.7);
\fill[green!25] (0,-.7) arc (-90:-74.3:.7) -- (0,0) -- (0,-.7);
\draw[line width=.9pt] (0,0) circle (3);
\draw[line width=.9pt] (0,0) circle (1.3);
\draw [->] (3.4,-1) -- (3.4,0);
\draw (3.4,-.5) node[anchor=west] {$\nu(\xi)$};
\draw (-2.7,1.3) node[anchor=east] {$B_1$};
\draw (-1.2,.5) node[anchor=east] {$B_r$};
\fill (.8,-2.89) circle (2pt);
\draw (.8,-2.89) node[anchor=north west] {$\xi$};
\fill (.8,-1.02) circle (2pt);
\draw (.8,-1.02) node[anchor=north west] {$\Psi(\xi)$};
\fill (.8,1.02) circle (2pt);
\draw (.8,-2.89) -- (.8,3.2);
\draw (0,-3) -- (0,0) -- (.8,-2.89);
\draw (.2,2.4) node[anchor=west] {$\sigma_\xi$};
\draw (0,0) -- (.8,-1.02);
\draw (0,-1.7) arc (-90:-74.3:1.7);
\draw (0,-.7) arc (-90:-52:.7);
\draw (0,-2) node[anchor=west] {$\theta_1$};
\draw (0,-.4) node[anchor=east] {$\theta_2$};
\end{tikzpicture}
\caption{The meaning of the radius $r$ and the definition of $\Psi$. Note that $\xi\cdot \nu(\xi)< \lambda<0$ if and only if the half-line $\sigma_\xi$ intersects $B_r$. Note also that $J^{N-1}_\Psi(\xi)=\cos(\theta_1)/\cos(\theta_2)$.}
\label{cdotandr}
\end{figure}
An elementary calculation shows that indeed $|\Psi(\xi)| = r$ and that, also by~(\ref{newone}),
\[
\Psi(\xi)\cdot \nu(\Psi(\xi))=\Psi(\xi)\cdot \nu(\xi)<0
\]
for all $\xi \in S_{\lambda}$.

Note also that $\Psi$ is smooth and thus we may calculate its tangential $(N-1)$-Jacobian $J_\Psi^{N-1}$. By a tedious but straightforward computation, recalling also that $\xi\mapsto \nu(\xi)$ is locally constant, we get for all $\xi \in S_{\lambda}$
\[
J^{N-1}_\Psi(\xi) = \frac{r |\xi \cdot \nu(\xi)|}{ \sqrt{r^2 -1 + (\xi \cdot \nu(\xi))^2}}\,.
\]
For a ``geometric'' interpretation of this fomula, see Figure~\ref{cdotandr}.

We now compute $\Ha^{N-1}(\Psi(S_{\lambda}))$ using the Area formula (see for instance~\cite[Theorem~2.91]{AmbrosioFuscoPallara00}):
\[
\begin{split}
\Ha^{N-1}(\Psi(S_{\lambda})) &= \int_{S_{\lambda}} J^{N-1}_\Psi \, d\Ha^{N-1} = \int_{S_{\lambda}} \frac{r |\xi \cdot \nu(\xi)|}{ \sqrt{r^2 -1 + (\xi \cdot \nu(\xi))^2}} \, d\Ha^{N-1}\\
&=\int_0^\infty \Ha^{N-1}\Big(\{ \xi \in S_{\lambda} : \frac{r \, |\xi \cdot \nu(\xi)|}{ \sqrt{r^2 -1 + (\xi \cdot \nu(\xi))^2}} \geq t \} \Big) \, dt, 
\end{split}
\] 
where the last equality follows by the layer-cake formula.

Notice that $ \frac{r \, |\xi \cdot \nu(\xi)|}{ \sqrt{r^2 -1 + (\xi \cdot \nu(\xi))^2}} \geq 1$ for all $\xi \in S_{\lambda}$, while for $t >1$ 
\[
\frac{r \, |\xi \cdot \nu(\xi)|}{ \sqrt{r^2 -1 + (\xi \cdot \nu(\xi))^2}} \geq t \quad \text{is equivalent to }\quad \xi \cdot \nu(\xi) \geq -\frac{t \, \sqrt{1 -r^2}}{\sqrt{t^2 - r^2}} = : \ell(t), 
\] 
where we used also that $\xi \cdot \nu(\xi)<0$ for $\xi \in S_{\lambda}$.
Recalling that $r = \sqrt{1 - \lambda^2}$ we may write the latter condition as 
\[
\xi \cdot \nu(\xi) \geq \ell(t) = \frac{t\lambda}{ \sqrt{t^2 +\lambda^2 -1}}. 
\]
Note that $\ell(1) = -1$, $\lim_{t \to \infty}\ell(t) = \lambda$, and $\ell(\cdot)$ is increasing, so that $-1<\ell(t)<\lambda$ for all $t\in (1,+\infty)$. Therefore, we have 
\beq\label{layer}
\begin{split}
\Ha^{N-1}(\Psi(S_{\lambda})) &=\int_1^\infty \Ha^{N-1}\Big(\{ \xi \in S_{\lambda} : \xi \cdot \nu(\xi) \geq \ell(t) \} \Big) \, dt + \Ha^{N-1}(S_{\lambda}) \\
&= \int_1^\infty \Ha^{N-1}\Big(\{ \xi \in \partial B_1 : \lambda > \xi \cdot \nu(\xi) \geq \ell(t) \} \Big) \, dt + \Ha^{N-1}(S_{\lambda}) \\
&= \int_1^\infty \Big( \Ha^{N-1}(S_{\lambda}) - \Ha^{N-1}(S_{\ell(t)}) \Big) \, dt + \Ha^{N-1}(S_{\lambda}).
\end{split}
\eeq
By the contradiction assumption~\eqref{contra} it holds 
\[
\Ha^{N-1}(S_{\lambda}) = \varphi(\lambda)+\e \quad \text{and} \quad \Ha^{N-1}(S_{\ell(t)}) < \varphi(\ell(t))+\e
\]
for all $t>1$ and therefore by~\eqref{layer}
\beq\label{quasi}
\Ha^{N-1}(\Psi(S_{\lambda})) > \int_1^\infty \big( \varphi(\lambda) - \varphi(\ell(t)) \big) \, dt + \varphi(\lambda)+\e\,. 
\eeq
A direct computation of the last integral seems to be tricky. However, we can overcome the difficulty by the following geometric argument: we pick any point $\bar x\in \pa\Co$ and consider the trivial situation $v:\{\bar x\}\to \R$. Without loss of generality we may assume that $\nu_\Co(\bar x)=e_N$. In this case, we have only one subdifferential $A_1=\R^N$, the map $\nu(\xi)$ introduced above trivialises to the constant map $ e_N$, 
the set $S_\lambda$ and the map $\Psi$ are replaced respectively by $\widetilde S_{\lambda} = \{ \xi \in \pa B_1 : \xi \cdot e_N < \lambda\}$ and 
\[
\widetilde \Psi(\xi) = \xi + \big( -\sqrt{r^2 -1 + (\xi \cdot e_N)^2} -(\xi \cdot e_N) \big) \, e_N. 
\]
It is immediate to verify that in this case $\widetilde \Psi(\widetilde S_\lambda)=\{\xi\in \pa B_r:\, \xi\cdot e_N<0\}$ and thus, by
applying~\eqref{layer} to $\widetilde \Psi$ and $\widetilde S_\lambda$, we get 
\[
\int_1^\infty \big( \varphi(\lambda) - \varphi(\ell(t)) \big) \, dt + \varphi(\lambda) = \frac12 \Ha^{N-1}(\partial B_r). 
\]
Combining with~\eqref{quasi} yields 
\[
\Ha^{N-1}(\Psi(S_{\lambda})) >\frac{1}{2}\Ha^{N-1}(\partial B_r)+\e
\]
which contradicts the first inequality in~\eqref{lambda=0bis} as $\Psi(S_{\lambda})\subset \pa B_r^-$. Hence, we have~\eqref{eq:to-show-eps} and the conclusion follows from the arbitrariness of $\e$. 

\noindent {\bf The case $\lambda>0$.} The argument for this case is ``symmetric'' to the previous one. To this aim, let us set
\[
S^+_{\lambda} := \{ \xi \in \pa B_1\setminus \cN : \xi \cdot \nu(\xi) > \lambda\} 
\]
and
\[
\varphi^+(\lambda) :=\Ha^{N-1}\big(\{ \xi \in \pa B_1 : \xi \cdot e_N > \lambda\}\big).
\]
Recalling~\eqref{tesi}, it is enough to prove that for any small $\e>0$
$$
\H^{N-1}(S^+_\lambda)> \varphi^+(\lambda)-\e\,\qquad \text{ for all }\lambda \in (0, 1)
$$
and as before, we argue by contradiction by assuming that there exists $\lambda >0$ such that 
\beq\label{contrabis}
\Ha^{N-1}\big(S^+_{\lambda} \big) = \varphi^+(\lambda)-\e \quad\text{and}\quad \Ha^{N-1}\big(S^+_{\lambda'} \big) > \varphi^+(\lambda')-\e\, \text{ for all }\lambda'\in (\lambda, 1).
\eeq

We define the map $\Psi^+ : S^+_{\lambda} \to \pa B_r$, 
\[
\Psi^+(\xi) = \xi + \big( \sqrt{r^2 -1 + (\xi \cdot \nu(\xi))^2} -(\xi \cdot \nu(\xi)) \big) \, \nu(\xi). 
\]
The geometric meaning of the map is the following: let $\sigma^-_\xi$ be the half-line $\{\xi-t\nu(\xi):\, t>0\}$. Then $\xi\in S^+_\lambda$ implies that $\sigma^-_\xi$ intersects $\pa B_r$ at two points and $\Psi^+(\xi)$ is the intersection point closer to $\xi$. This time, we claim that 
\beq\label{thistime}
\pa B^+_r\subset\Psi^+(S^+_\lambda).
\eeq
Indeed, let $ \xi'\in \pa B^+_r$. Then, the half-line $\{ \xi'+t\nu(\xi'):\, t>0\}$ intersects $\pa B_1$ at one point $\widehat \xi$ such that $\widehat \xi\cdot \nu(\xi')>\lambda$. By the half-line property~\eqref{halfline}, $ \xi'$ and $\widehat \xi$ belong to the same subdifferential and thus 
$\nu( \xi')=\nu(\widehat \xi)$. Therefore, by the very definition of $\Psi^+$ we have $\Psi^+(\widehat \xi)=\xi'$ and thus~\eqref{thistime} follows. In turn, recalling~\eqref{lambda=0bis}, it follows that 
\beq\label{lambda=0tris}
\H^{N-1}\big(\Psi^+(S^+_\lambda)\big)\geq \frac 12\H^{N-1}(\pa B_r).
\eeq

Arguing as before, by the Area and layer-cake formulas, we get
\[
\Ha^{N-1}(\Psi^+(S^+_{\lambda}))= \int_0^\infty \Ha^{N-1}\Big(\{ \xi \in S^+_{\lambda} : \frac{r \, \xi \cdot \nu(\xi)}{ \sqrt{r^2 -1 + (\xi \cdot \nu(\xi))^2}} \geq t \} \Big) \, dt.
\] 

Now for $t >1$, $ \frac{r \, \xi \cdot \nu(\xi)}{ \sqrt{r^2 -1 + (\xi \cdot \nu(\xi))^2}} \geq t $ is equivalent to 
\[
\xi \cdot \nu(\xi) \leq \ell(t):= \frac{t\lambda}{ \sqrt{t^2 +\lambda^2 -1}},
\]
where $\lambda<\ell(t)<1$ for all $t\in (1,+\infty)$. Arguing as in the previous case, we may write
\[
\Ha^{N-1}(\Psi^+(S^+_{\lambda})) = \int_1^\infty \Big( \Ha^{N-1}(S^+_{\lambda}) - \Ha^{N-1}(S^+_{\ell(t)}) \Big) \, dt + \Ha^{N-1}(S^+_{\lambda})
\]
and, using the contradiction assumption~\eqref{contrabis}, we arrive at 
\[
\Ha^{N-1}(\Psi(S^+_{\lambda})) < \int_1^\infty \big( \varphi^+(\lambda) - \varphi^+(\ell(t)) \big) \, dt + \varphi^+(\lambda)-\e= \frac{1}{2}\Ha^{N-1}(\partial B_r)-\e,
\]
contradicting~\eqref{lambda=0tris}.
\end{proof}


\section{The capillary isoperimetric inequality outside convex sets}

In this section we finally prove Theorem~\ref{thm1}. We will need the following approximation lemmas whose delicate and technical proofs are postoponed until the final Appendix.

\begin{lemma}\label{appro}
Let $\Co\subset\R^N$ be a closed convex set with nonempty interior. Let $E\subset\R^N\setminus\Co$ be a set of finite perimeter. Then there exist a sequence of smooth closed convex sets $\Co_n$ and a sequence of open sets $\Om_n\subset\R^N\setminus\Co_n$, satisfying the following properties:
\vskip 0.2cm \par\noindent
{\rm (i)}\, $\Co_n\to\Co$ in the Kuratowski sense;
\vskip 0.2cm \par\noindent
{\rm (ii)}\, $\Om_n$ is a bounded Lipschitz domain s.t. $\Sigma_n:=\pa\Om_n\setminus\Co_n$ 
is a $(N-1)$-manifold with boundary of class $C^{\infty}$;
\vskip 0.2cm \par\noindent
{\rm (iii)}\, $|\Om_n\triangle E|\to 0$ as $n\to\infty$, $\pa\Om_n\subset\big\{x:\,{\rm dist}(x,\pa E)<\frac{1}{n}\big\}$;
\vskip 0.2cm \par\noindent
{\rm (iv)}\, $P(\Om_n;\R^N\setminus\Co_n)\to P(E;\R^N\setminus\Co)$;
\vskip 0.2cm \par\noindent
{\rm (v)}\, $\H^{N-1}(\pa\Om_n\cap \Co_n)\to\H^{N-1}(\pa^*E\cap\Co)$ and $\H^{N-1}\res(\pa\Om_n\cap\Co_n)\wstar\H^{N-1}\res(\pa^*E\cap\Co)$ in the sense of measures.
\end{lemma}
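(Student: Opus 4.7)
My approach builds $(\Co_n,\Om_n)$ in three stages: (a) approximate $\Co$ from inside by smooth convex sets; (b) extend $E$ by reflection across $\pa\Co$ so that the wetted region becomes interior to an auxiliary set $\hat E$; and (c) mollify $\chi_{\hat E}$ and take a smooth superlevel set, intersected with $\R^N\setminus\Co_n$.

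For (a), the function $x\mapsto\dist(x,\R^N\setminus\Co)$ is concave on $\Co$, so for every $\eta>0$ the set $\Co^{(\eta)}:=\{\dist(\cdot,\R^N\setminus\Co)\geq\eta\}$ is closed convex, $\Co^{(\eta)}\subset\Co$, and $\Co^{(\eta)}\to\Co$ in the Kuratowski sense as $\eta\to 0^+$. Mollifying $\chi_{\Co^{(2\eta_n)}}$ at a scale much smaller than $\eta_n$ and taking an appropriate superlevel set yields a smooth closed convex $\Co_n\subset\Co$ with $\Co_n\to\Co$ in Kuratowski sense and $\dist(\pa\Co_n,\pa\Co)$ of order $\eta_n$. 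For (b), since $\Co$ is convex the nearest-point projection $\pi_\Co\colon\R^N\to\Co$ is single-valued and $1$-Lipschitz; for $\rho>0$ small, the reflection $R_\Co(x):=2\pi_\Co(x)-x$ maps the tubular strip $U_\rho:=\{0<d_\Co<\rho\}$ into $\Co$, and I define
\[
\hat E:=E\cup R_\Co(E\cap U_\rho).
\]
At $\H^{N-1}$-a.e.\ point of the wetted region $\Gamma:=\pa^*E\cap\pa\Co$ the boundary $\pa\Co$ is differentiable and $E$ occupies a half-ball on the outer side; the reflection fills the inner half-ball, so $\hat E$ has density $1$ there and $\Gamma$ disappears from $\pa^*\hat E$. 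For (c), take a standard mollifier $\rho_\e$, set $u_n:=\chi_{\hat E}*\rho_{\e_n}$ with $\e_n\ll\eta_n$, and use the coarea formula and Sard's theorem, together with a Fubini argument in the level parameter $s$, to choose $s_n$ close to $\tfrac12$ for which $\{u_n=s_n\}$ is a $C^\infty$ hypersurface meeting $\pa\Co_n$ transversally and contained in a $1/n$-neighborhood of $\pa E$. Set
\[
\Om_n:=\{u_n>s_n\}\cap(\R^N\setminus\Co_n).
\]
Since $\e_n\ll\eta_n$ and $\hat E$ has density $1$ on the thin layer above $\Gamma$, one has $u_n\equiv 1$ there, so $\Om_n$ fills $\Co\setminus\Co_n$ above $\Gamma$ entirely, and $\pa\Om_n\cap\Co_n$ is an open subset of $\pa\Co_n$ essentially coinciding with the normal projection of $\Gamma$. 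Transversality guarantees that $\Om_n$ is a bounded Lipschitz domain with $C^\infty$ free interface $\Sigma_n=\pa\Om_n\setminus\Co_n$.

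\emph{Verifications and main obstacle.} Items (i)--(iii) follow routinely from the construction and from $\chi_{\hat E}*\rho_{\e_n}\to\chi_{\hat E}$ in $L^1$; item (iv) is obtained from the standard convergence of perimeters for mollified level sets, after noticing that $\Sigma_n$ consists essentially of the outer copy of $\pa^*\hat E$ near $\Sigma:=\pa^*E\setminus\Co$ (plus a thin reflected flap of area $O(\eta_n)$ that vanishes in the limit). The main obstacle is item (v), the convergence $\H^{N-1}(\pa\Om_n\cap\Co_n)\to\H^{N-1}(\Gamma)$ together with the weak-$*$ convergence of the associated trace measures on $\pa^*E\cap\Co$. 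Since $\pa\Om_n\cap\Co_n$ is essentially the normal projection of $\Gamma$ onto $\pa\Co_n$, one must control the area of this projection uniformly in $n$; this requires a coarea argument showing that the slices $\hat E\cap\{d_\Co=s\}$ have $\H^{N-1}$-measure tending to $\H^{N-1}(\Gamma)$ as $s\to 0^+$, combined with $\H^{N-1}$-a.e.\ convergence of the tangent planes of $\pa\Co_n$ to those of $\pa\Co$. Coordinating these estimates with a careful simultaneous choice of the parameters $(\eta_n,\e_n,s_n)$ is the technical core of the proof.
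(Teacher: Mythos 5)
Your scaffolding parallels the paper's: build a jump-free extension of $\chi_E$ across $\pa\Co$, mollify, take a smooth superlevel set, and intersect with the complement of a smooth convex approximation of $\Co$. (You approximate $\Co$ from inside; the paper uses smooth convex sets $\Co_\sigma^k$ containing $\Co$ built from the dilates $(1+\sigma)\Co$, but that difference is inessential.) The real divergence is in the extension: the paper invokes the abstract $BV$ extension \cite[Prop.~3.21]{AmbrosioFuscoPallara00} to get $u\in BV(\R^N)$ with $|Du|(\pa\Co)=0$, whereas you use the set reflection $R_\Co=2\pi_\Co-\mathrm{id}$, and that choice has a genuine gap. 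For $\Co$ merely convex (the lemma allows any closed convex set with nonempty interior, in particular polytopes), $R_\Co$ is \emph{not injective} on any outer tube: for $\Co=[-1,1]^2$, $R_\Co(1.1,\,1.05)=R_\Co(1.1,\,0.95)=(0.9,\,0.95)$. Consequently $R_\Co$ is not a bi-Lipschitz homeomorphism, and your assertion that $\hat E$ has density $1$ at $\H^{N-1}$-a.e.\ point of $\Gamma$ does not follow from mere $C^1$ a.e.\ differentiability of $\pa\Co$; one would need Alexandrov second-order points and a careful blow-up analysis to conclude that $R_\Co$ looks like a linear reflection at the relevant scale. None of this is addressed, and it is precisely what the abstract extension theorem circumvents. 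Also, the ``reflected flap of area $O(\eta_n)$'' is unjustified; what is true is that $|D\chi_{\hat E}|(\Co\setminus\Co_n)\to |D\chi_{\hat E}|(\pa\Co)=0$, which is merely $o(1)$.

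Second, you correctly flag item (v) as the technical core, but the resolution you offer is a plan, not a proof. The paper closes (v) by structuring the limits so that the mollification parameter is sent to zero \emph{while the cutting convex hypersurface $\pa\Co_{\sigma,s}^k$ is held fixed}, which makes the convergence of the wetted areas a direct consequence of the continuity of the $BV$ trace operator \cite[Thm.~3.88]{AmbrosioFuscoPallara00}; only then are $s$, $k$, $\sigma$ sent to their limits and a diagonal extraction produces $(\Om_n,\Co_n)$. In your construction the cutting surface $\pa\Co_n$ and the mollification scale $\e_n$ vary simultaneously, so the trace theorem cannot be applied as stated, and the coarea/tangent-plane heuristic you describe would have to be reorganized into exactly this kind of one-parameter-at-a-time scheme. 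The overall strategy is sound and genuinely close to the paper's, but as written the reflection step is broken for nonsmooth $\Co$ and item (v) is asserted rather than established.
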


\begin{remark}\label{rm:fessa}
Note that if $E\subset \R^N\setminus \Co$ is a set of finite perimeter and $\Om_n$, $\Co_n$ are the approximating sets given in the previous lemma, then clearly
\[
J_{\lambda, \Co_n}(\Om_n)\to J_{\lambda, \Co}(E)\,.
\] 
\end{remark}

Next approximation lemma is needed to deal with the characterization of the equality case in Theorem~\ref{thm1}.

\begin{lemma}\label{approbis}
Let $\Co\subset\R^N$ be a closed convex set with nonempty interior. Let $\Om\subset\R^N\setminus\Co$ be a minimizer of problem~\eqref{def:min-prob} for some $m>0$. Then there exist a sequence of smooth closed convex sets $\Co_n$ and a sequence of open sets $\Om_n\subset\R^N\setminus\Co_n$, satisfying properties {\rm (i)--(v)} of Lemma~\ref{appro} with $E$ replaced by $\Om$, and in addition:
\vskip 0.2cm \par\noindent
{\rm (vi)} if $x\in\pa\Om\setminus\Co$, $\overline B_r(x)\subset \R^N\setminus \Co$, and $\overline B_r(x)\cap\Sigma_{sing}=\emptyset$, where $\Sigma_{sing}\subset\pa\Om\setminus\Co$ is the set of singular points of $\pa\Om\setminus\Co$, then $\pa\Om_n\cap \overline B_r(x)$ converge to $\pa\Om\cap \overline B_r(x)$ in $C^\infty$.
\end{lemma}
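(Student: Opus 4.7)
The plan is to refine the approximation of Lemma~\ref{appro} so that the smoothing is localized to a shrinking neighborhood $U_n$ of the ``bad set''
\[
B := \gamma \cup \Sigma_{sing},
\]
while $\Om_n$ agrees with $\Om$ on its complement. Since, by the regularity theory of capillary minimizers, $\pa\Om\setminus\Co$ is a smooth embedded hypersurface away from $B$, this will render property~(vi) essentially tautological: any ball $\overline B_r(x)\subset\R^N\setminus\Co$ avoiding $\Sigma_{sing}$ is automatically disjoint from $B$ (since $\gamma\subset\Co$), and hence from $\overline U_n$ for all $n$ large by compactness, yielding $\Om_n\cap\overline B_r(x)=\Om\cap\overline B_r(x)$ eventually, so that the $C^\infty$ convergence becomes eventual equality.

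The regularity input is that, since $\Om$ is a capillary minimizer, $\Sigma=\pa\Om\setminus\Co$ is a smooth embedded $(N-1)$-manifold away from a closed singular set $\Sigma_{sing}$ of Hausdorff dimension at most $N-8$ (Taylor~\cite{Taylor77}, De Philippis--Maggi~\cite{De-PhilippisMaggi15}). In particular $B$ is relatively closed and $r\mapsto\H^{N-1}(\pa\Om\cap(B)_r)$ tends to $0$ as $r\to 0^+$. I would select a sequence $r_n\downarrow 0$ such that the level set $\{\dist(\cdot,B)=r_n\}$ intersects the smooth part of $\pa\Om$ transversally (via Sard's theorem applied to the distance function restricted to that smooth part), and set $U_n:=\{\dist(\cdot,B)<r_n\}$. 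I would then run the construction of Lemma~\ref{appro} only on the smaller set $\{\dist(\cdot,B)<r_n/2\}$, leaving $\pa\Om$ unchanged on the annulus $\{r_n/2\leq\dist(\cdot,B)\leq r_n\}$ (where it is already smooth) and on the complement of $U_n$, so that the two pieces match across $\pa U_n$ without any interpolation needed. The convex set is smoothed to $\Co_n$ exactly as in Lemma~\ref{appro}, and Kuratowski convergence $\Co_n\to\Co$ guarantees that any compact $\overline B_r(x)\subset\R^N\setminus\Co$ lies in $\R^N\setminus\Co_n$ for all $n$ large, so that no further alteration is needed in the good region.

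Properties (i)--(v) are inherited from the local application of Lemma~\ref{appro} inside $\{\dist(\cdot,B)<r_n/2\}$, combined with the negligible volume $|U_n|\to 0$ and perimeter $\H^{N-1}(\pa\Om\cap U_n)\to 0$ contributed by the modification region. The main obstacle I anticipate is executing the construction of Lemma~\ref{appro} in the localized manner just described: the mollification or normal-graph smoothing underlying that lemma must be performed with a smooth cutoff supported in $\{\dist(\cdot,B)<r_n/2\}$, and one must verify that the locally smoothed set still lies in $\R^N\setminus\Co_n$ and still satisfies all five convergence properties on the local neighborhood, including the measure-theoretic convergence (v) of the wetted regions $\pa\Om_n\cap\Co_n$ to $\pa^*\Om\cap\Co$. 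A secondary technical point is the choice of $r_n$: to ensure $\pa U_n$ meets $\pa\Om$ in a smooth transversal hypersurface, one needs Sard's theorem for the distance function to $B$ restricted to the regular part of $\Sigma$, which is routine but relies on $\Sigma_{sing}$ being genuinely negligible; once this transversality is in hand, the cutoff being strictly interior to $U_n$ makes the matching of the two pieces across $\pa U_n$ automatic and preserves the $C^\infty$-regularity of $\Sigma_n$.
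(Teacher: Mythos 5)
Your plan --- localize the smoothing to a shrinking neighborhood of $B=\gamma\cup\Sigma_{sing}$ and leave $\Om_n=\Om$ elsewhere --- is a natural instinct, but the central matching step is not resolved, and I do not see how to make it work as written. The construction of Lemma~\ref{appro} mollifies an extension of the characteristic function of $\Om$ over \emph{all} of $\R^N$ and takes a superlevel set; this perturbs $\pa\Om$ globally, not just near $B$. So if you run that construction only on $\{\dist(\cdot,B)<r_n/2\}$ and leave $\pa\Om$ untouched outside, the two pieces will generically disagree on the transition region, and the claim that they ``match across $\pa U_n$ without any interpolation needed'' is false. You then fall back on a cutoff mollification, but the resulting function is no longer a mollification of the original extension, and you do not verify that its superlevel set is Lipschitz, that its free boundary is a smooth manifold across the transition annulus, or that the measure-theoretic convergences (iii)--(v) --- in particular the weak-$*$ convergence of the wetted regions in (v) --- survive the cutoff. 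There is also a secondary oversight: even on the ``good'' annulus $\{r_n/2\leq\dist(\cdot,B)\leq r_n\}$ the set cannot simply be left equal to $\Om$, because $\Gamma\setminus\gamma$ lies there and one must still pass to $\Om\setminus\Co_n$ wherever the smoothed convex body $\Co_n\supset\Co$ has grown.

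The paper sidesteps the gluing issue entirely by mollifying a different object: instead of the $BV$ extension of the characteristic function used in Lemma~\ref{appro}, it mollifies the \emph{signed distance function} $\sd_\Om$ from $\pa\Om$ and takes the sublevel sets $\{\sd^{\e_m}_\Om<t/n\}$. Since $\sd_\Om$ is already a genuine $C^\infty$ function in a neighborhood of any point of the regular part of $\pa\Om\setminus\Co$, the mollifications converge in $C^\infty$ locally there, which gives (vi) immediately with no cutoff or matching. Properties (iii)--(v) are then recovered from the $\Lambda$-minimality of $\Om$: the density estimate~\eqref{approbis1} together with~\cite[Theorem~5]{ACV08} shows that the outer Minkowski content of $\pa\Om$ equals its perimeter, and this equality is what replaces the role of the $BV$-extension argument of Lemma~\ref{appro}. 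The two ingredients missing from your proposal are this Minkowski-content characterization and the observation that the signed distance, rather than the characteristic function, is the quantity whose global mollification delivers local $C^\infty$-convergence of the free boundary for free.
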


\begin{proof}[\textbf{Proof of Theorem~\ref{thm1}}] For simplicity, we consider here the case when $\Co$ has nonempty interior. The extension to the general case is described in Remark~\ref{empty}. We start by showing the inequality~\eqref{main1} if $\Co$ and $E=\Omega\subset\R^N\setminus\Co$ satisfy~\eqref{Coreg} and ~\eqref{Omreg}, respectively, and $\Om$ is connected. 

By scaling we may assume $m=|B^\lambda|$. Let $u :\overline \Omega \to \R$ be the variational solution of the Neumann boundary problem ~\eqref{eq:neumann-0} and denote its restriction to $\Gamma \subset \pa \Co$ by $u_\Gamma$. Let $\mathcal B_{u_{\Gamma}}^\lambda$ be the set defined in~\eqref{blambdau}, with $v=u_{\Gamma}$, and let $\widehat \Omega$ be the set defined in~\eqref{omegahat}. Then by Lemma~\ref{lem:u-gamma} and Theorem~\ref{thm:maingeo} we have
\[
|\nabla u(\widehat \Om)|\geq |\mathcal B_{u_{\Gamma}}^\lambda | \geq |B^\lambda|\, . 
\]
Therefore we have 
\begin{equation}\label{eq:cabre}
\begin{split}
|\Omega|=|B^\lambda|&\leq |\nabla u(\widehat\Om)| \leq \int_{\widehat\Om}\det\nabla^2u \, dx \\ &\leq \int_{\widehat\Om}\frac{(\Delta u)^N}{N^N}\, dx 
= \left( \frac{\J(\Omega )}{|\Omega| N}\right)^N |\widehat\Om|\leq \left( \frac{\J(\Omega )}{|\Omega| N}\right)^N |\Omega|. 
\end{split}
\end{equation}
Since
\[
J_{\lambda,\textbf{H}}= N |B^\lambda| = N|\Omega|, 
\]
the inequality~\eqref{main1} follows. 

We now remove the connectedness assumption, that is we consider $\Co$ and $E=\Omega\subset\R^N\setminus\Co$ satisfying~\eqref{Coreg} and ~\eqref{Omreg}. Decomposing $\Omega=\cup_{i=1}^n\Om_i$, where $\Om_i$ are the connected components, and setting $m_i:=|\Om_i|$, we have
\beq\label{connect}
\J(\Om)=\sum_{i=1}^n \J(\Om_i)\geq \sum_{i=1}^n J_{\lambda,\textbf{H}}(B^\lambda[m_i])>J_{\lambda,\textbf{H}}(B^\lambda[m]), 
\eeq
where the last inequality follows from strict concavity of the map $m\mapsto J_{\lambda,\textbf{H}}(B^\lambda[m])$.
The general case of a set of finite perimeter $E$ now follows by approximation, using Lemma~\ref{appro} and Remark~\ref{rm:fessa}.

We now analyse the case of equality. Assume that $E$ is a set of finite perimeter with $|E|=m=|B^\lambda|$ for which equality in~\eqref{main1} holds. In particular, $E$ is a minimizer of the isoperimetric problem~\eqref{def:min-prob} and therefore it coincides (up to negligible sets) with an open set $\Om$, see for instance~\cite{FFLM22}. Moreover, by the same argument as in~\eqref{connect}, we know that $\Om$ is connected. 

Let $\Om_n$ and $\Co_n$ be the two approximating sequences provided by Lemma~\ref{approbis}, and denote the connected components of $\Om_n$ as $\Om^i_n$, with $i=1,\, 2,\, \dots\, ,\, K_n$, where $|\Om^1_n|\geq |\Om^i_n|$ for every $i$. Let now $u_{n,i}$ be the variational solution of 
\beq\label{senew}
\begin{cases}
&\Delta u_{n,i} = \frac{J_{\lambda,\Co_n}(\Om^i_n)}{|\Omega^i_n|} \quad \text{in } \, \Omega^i_n \\
&\pa_\nu u_{n,i} = 1 \quad \text{on } \, \Sigma^i_n\\ 
&\pa_\nu u_{n,i} = -\lambda \quad \text{on } \, \Gamma^i_n, 
\end{cases}
\eeq
where $\Sigma^i_n=\pa\Om^i_n\setminus \Co_n$, $\Gamma^i_n:=\pa\Om^i_n\cap\Co_n$. Arguing as in the first part of the proof (see~\eqref{eq:cabre}), for every $1\leq i \leq K_n$ we have
\beq\label{eq:cabrebis}
\begin{split}
|B^{\lambda}|&\leq |\mathcal B^\lambda_{u_{n,i}}|\leq |\nabla u_{n,i}(\widehat\Om^i_n)|\leq \int_{\widehat\Om^i_n}\det\nabla^2 u_{n,i}\, dx\\
&\leq \int_{\widehat\Om^i_n}\frac{(\Delta u_{n,i})^N}{N^N}\, dx\leq \frac{(J_{\lambda,\Co_n}(\Om^i_n))^N}{N^N|\Om^i_n|^N }|\widehat\Om^i_n|
\leq \frac{(J_{\lambda,\Co_n}(\Om^i_n))^N}{N^N|\Om^i_n|^N }|\Om^i_n|\,,
\end{split}
\eeq
where 
\[
\widehat\Om^i_n:=\big\{x\in \Om^i_n: J_{{\overline \Om}^i_n}u_{n,i}(x)\neq\emptyset\text{ and }\nabla u_{n,i}(x)\in B_1\big\}.
\]
Summing up the inequality we deduce
\beq\label{vesa1}
J_{\lambda,\Co_n}(\Om_n) = \sum_{i=1}^{K_n} J_{\lambda,\Co_n}(\Om^i_n) \geq N |B^\lambda|^{\frac 1N} \sum_{i=1}^{K_n} |\Omega^i_n|^{\frac{N-1}N}
\geq N |B^\lambda|^{\frac 1N} |\Omega_n|^{\frac{N-1}N}\,.
\eeq
Note that by properties (iii), (iv) and (v) of Lemma~\ref{appro} we have that $|\Om_n|\to |\Om|=|B^\lambda|$ and
\beq\label{qeq}
J_{\lambda,\Co_n}(\Om_n)\to J_{\lambda,\Co}(\Om)=J_{\lambda,\Co}(B^\lambda)=N|B^\lambda|\,,
\eeq
and combining this with the last inequality in~(\ref{vesa1}) we deduce that $|\Om^1_n|\to |B^\lambda|$, while $\sum_{i=2}^{K_n} |\Om^i_n| \to 0$. Therefore, up to replacing $\Om_n$ with $\Om^1_n$, we may from now on assume that $\Om_n$ is connected, and we simply write $u_n$ and $\widehat\Omega_n$ in place of $u_{n,1}$ and $\widehat\Om_{n,1}$.\par

Let us observe that (up to additive constants), we may assume that each $u_n$ vanishes at some point $x_n$ of $\widehat\Om_n$.
Thus, we have
\[
u_n(y)\geq \nabla u_n(x_n)\cdot (y-x_n)\geq -\mathrm{diam\,}(\Om_n)\quad \text{for all }y\in \Om_n\,,
\]
where we used the fact that $ |\nabla u_n(x_n)|<1$.
Thus the $u_n$'s are uniformly bounded from below. Since they solve the equation $\Delta u_n=c_n$, with $c_n$ uniformly bounded, it follows from a standard Harnack inequality that 
\[
\sup_n \|u_n\|_{L^\infty(\Om')}<+\infty \quad\text{for all } \Om'\subset\!\subset \Om.
\]
In turn, recalling also~\eqref{qeq} and by standard elliptic regularity, we may assume that there exists $u\in C^\infty(\Om)$ such that up to extracting a (non relabelled) subsequence
$$
\Delta u= N\, \text{ in }\Om \quad \text{ and }\quad u_n\to u \in C^\infty(\overline \Om')\, \text{ for all } \Om'\subset\!\subset \Om.
$$
Note now that by~\eqref{qeq}, the inequalities in~\eqref{eq:cabrebis} (with $K_n=1$) become equalities in the limit for $u$. In particular, $|\widehat\Om_n|\to |\Om|=|B^\lambda|$ and since $|\Om_n\triangle \Om|\to 0$, we have (up to a non relabelled subsequence) 
\beq\label{senzanome}
\Chi{\widehat\Om_n}\to \Chi{\Om}\text{ almost everywhere.}
\eeq
Thus, by the Dominated Convergence Theorem, we may pass to the limit in~\eqref{eq:cabrebis} to conclude that 
\[
\int_{\Om}\det\nabla^2u\, dx= \int_{\Om}\frac{(\Delta u)^N}{N^N}\, dx
\]
and, in turn, since $\det\nabla^2u \leq \big(\frac{\Delta u}{N}\big)^N=1$,
\[
\det\nabla^2u= 1 \quad\text{in $\Om$}\,.
\]
The above equality in the arithmetic-geometric mean inequality implies that all the eigenvalues of $\nabla^2u$ are equal to 1 in $\Om$. Thus, $\nabla^2u=I$ in $ \Om$ and in turn by the connectedness of $\Om$, there exist $x_0\in \R^N$ and $b\in \R$ such that
\beq\label{u}
u(x)=\frac12|x-x_0|^2+b\quad\text{for all }x\in \Om.
\eeq
Note also that since $|\nabla u_n|<1$ in $\widehat\Om_n$ and recalling~\eqref{senzanome}, we have also that 
$|\nabla u|\leq 1$ in $\Om$ and thus~\eqref{u} implies $\Om\subset B_1(x_0)$.


We now study the boundary conditions satisfied by $u$. To this aim, let $\overline B_{2r}(x)\subset \R^N\setminus \Co$, with $x\in \pa\Om$ and $\overline B_{2r}(x)\cap \Sigma_{sing}=\emptyset$, where $\Sigma_{sing}$ is the possibly empty singular set of $\pa \Om\setminus \Co$. By (vi) of Lemma~\ref{approbis} we have that $\pa\Om_n\cap \overline B_{2r}(x)$ converge in $C^\infty$ to $\pa \Om\cap \overline B_{2r}(x)$. Since by~(\ref{senew})
\beq\label{wform}
\begin{split}
\int_{B_{2r}(x)\cap \Om_n }\nabla u_n\cdot \nabla\varphi=-c_n &\int_{B_{2r}(x)\cap \Om_n}\varphi\, dx\\
&+\int_{\pa \Om_n\cap B_{2r}(x)}\varphi\, d\H^{N-1}\quad\text{for all }\varphi\in H^1_0(B_{2r}(x)),
\end{split}
\eeq
and since $\pa \Om_n\cap B_{2r}(x)$ are uniformly Lipschitz boundaries, by a standard Harnack Inequality up to boundary we have that, up to possibly replacing $u_n$ by $\tilde u_n:=u_n+d_n$, $d_n\in \R$, we have 
\[
\sup_n\|\tilde u_n\|_{L^\infty(B_{3r/2}(x)\cap \Om_n)}<+\infty.
\]
In turn, by a Caccioppoli Inequality argument and exploiting that Trace Theorem holds on $\pa \Om_n\cap B_{\frac32r(x)}$ with uniform constants, we deduce 
\[
\sup_n\|\tilde u_n\|_{H^1(\Om_n\cap B_r(x))}<+\infty\,.
\] 
Thus we may extend each $\tilde u_n$ to the whole $B_r(x)$, in such a way that 
\[
\sup_n\|\tilde u_n\|_{H^1(B_r(x))}<+\infty\,.
\] 
Hence, up to a not relabelled subsequence, $\tilde u_n\wto \tilde u$ weakly in $H^1(B_r(x))$, with $\nabla \tilde u=\nabla u$ in $\Om\cap B_r(x)$. Therefore, we can pass to the limit in~\eqref{wform} to get
\[
\int_{B_r(x)\cap \Om }\nabla u\cdot \nabla\varphi=-N\int_{B_r(x)\cap \Om}\varphi\, dx+\int_{\pa \Om\cap B_r(x)}\varphi\, d\H^{N-1}\quad\text{for all }\varphi\in C^\infty_c(B_r(x)),
\]
which yields $\pa_\nu u=1$ on $\pa\Om\cap B_r(x)$ and thus on $\pa\Om\setminus (\Co\cup\Sigma_{sing})$ by the arbitrariness of $B_r(x)$. In turn, since $1=\pa_\nu u(x)=(x-x_0)\cdot \nu_{\Om}(x)$ for all $x\in\pa\Om\setminus (\Co\cup\Sigma_{sing})$ and recalling that $\Om\subset B_1(x_0)$, we have necessarily $\pa\Om\setminus (\Co\cup\Sigma_{sing})\subset\pa B_1(x_0)$. Hence, $\Sigma_{sing}=\emptyset$ and $\pa\Om\setminus \Co\subset\pa B_1(x_0)$.

Note now that, since $\Co_n\to \Co$ in the Kuratowski sense, it follows that the boundaries $\pa \Co_n$ are locally equi-Lipschitz. Consequently, for every ball $B_r(x)$ such that $\overline B_r(x)\cap \pa \Om=(\Gamma\setminus \gamma)\cap \overline B_r(x)$, we eventually obtain $\overline B_r(x)\cap \pa \Om_n=(\Gamma_n\setminus \gamma_n)\cap \overline B_r(x)$, allowing us to extend each $u_n$ to functions $\tilde u_n\in H^1(B_r)$ having uniformly bounded $H^1$-norms. Hence, up to extracting a subsequence (not explicitly relabeled), we may assume $\tilde u_n\rightharpoonup \tilde u$ weakly in $H^1(B_r(x))$, where $\tilde u=u$ within $\Om\cap B_r(x)$. Proceeding similarly to before, we deduce that $\pa_\nu u=-\lambda$ almost everywhere on $\Gamma\setminus \gamma$.

Take now a point $\hat x\in \Gamma\setminus \gamma$ and introduce the half-space
$$
H:=\{y\in \R^N:\, (y-x_0)\cdot\nu_{\Co}(\hat x)>\lambda\}.
$$
Considering that $-\lambda= \pa_\nu u(\hat x)
=(\hat x-x_0)\cdot\nu_{\Om}(\hat x)
=-(\hat x-x_0)\cdot\nu_{\Co}(\hat x)$, we infer that $\Co$ must be contained within $\R^N\setminus H$, and that the boundary $\pa H$ is tangent to $\Co$ exactly at the point $\hat x$. Additionally, for any non-tangential direction $v\in \S^{N-1}$ satisfying $v\cdot \nu_{\Co}(\hat x)>0$, all points along the half-line $\hat x+tv$, $t>0$, within the ball $B_1(x_0)$ lie entirely within $\Om$. Indeed, otherwise this half-line would intersect $\pa\Om$ at a point belonging to $B_1\setminus \Co$, leading to a contradiction to the fact that $\pa\Om\setminus \Co\subset\pa B_1(x_0)$. Thus, we conclude $B_1(x_0)\cap H\subseteq \Om$. Finally, as the set $B_1(x_0)\cap H$ is a spherical cap isometric to $B^\lambda$ and since $|\Om|=|B^\lambda|$, we deduce that $\Om=B_1(x_0)\cap H$, concluding the proof of the theorem.
\end{proof}

\begin{remark}\label{empty}
Let us here describe how the proof of Theorem~\ref{thm1} can be extended to the case of a convex set $\Co$ with empty interior. First of all, we point out that in this case the capillary energy must be defined as follows
\beq\label{empty1}
\J(E):=P(E; \R^N \setminus \Co) - \lambda\int_{\Co}\big(\mathrm{Tr}^+(\Chi{E})+\mathrm{Tr}^-(\Chi{E})\big)\,d\H^{N-1},
\eeq
where $\mathrm{Tr}^\pm(\Chi{E})$ denote the traces of the characteristic function $\Chi{E}$ on both sides of $\Co$, see for instance~\cite[Theorem 3.77]{AmbrosioFuscoPallara00}. Then, inequality~(\ref{main1}) immediately follows by approximating $\Co$ with convex sets with nonempty interior. Concerning the equality case, it can be obtained as in the proof for the nonempty interior case, since both Lemma~\ref{appro} and~\ref{approbis} extend to the empty interior case. For Lemma~\ref{appro}, this is readily obtained with a further approximation of $\Co$. A direct argument to extend Lemma~\ref{approbis} is to argue exactly as in the proof presented in the Appendix, replacing $\Co$ by its $\eps$-neighborhood $\Co_\eps$, and $\Omega$ by $\Omega_\eps=\Omega\setminus \Co_\eps$ where $\Omega$ is a solution of problem~(\ref{def:min-prob}) in $\R^N\setminus\Co$.
\end{remark}

\section{Appendix}
In this section we give the proofs of the two approximation lemmas that were used to prove the main theorem.

\begin{proof}[\bf Proof of Lemma~\ref{appro}] Assume first that $E$ is a bounded set of finite perimeter. Let $B_R$ be a ball such that $E\subset\!\subset B_R$ and assume without loss of generality that the interior of $\Co$ contains the origin. Moreover, we can assume without loss of generality that $\H^{N-1}(\pa^* E \cap \Co)>0$, since otherwise the claim is obvious.\par
Given $\sigma>0$ we begin by constructing a sequence of smooth convex sets $\Co_{\sigma}^k\subset\R^N$, with $\Co\subset \Co_{\sigma}^k$, converging to $\Co_\sigma:=(1+\sigma)\Co$ in the Kuratowski sense as $k\to\infty$ and such that $(1+\sigma)\Co\cap B_R\subset\Co_{\sigma}^k\cap B_R$. Up to slightly dilating $\Co_{\sigma}^k$ if needed, we may always assume that 
\beq\label{appro1}
\H^{N-1}(\pa^* E\cap\pa\Co_{\sigma}^k)=0 \qquad \text{for all } \, k,\sigma.
\eeq
We consider the signed distance function ${\rm sd}_{\Co_{\sigma}^k}(x)$ from $\pa\Co_{\sigma}^k$, which is a $C^\infty$ function in $O_{\sigma}^k=\{x:\,{\rm sd}_{\Co_{\sigma}^k}(x)>-\eta_{\sigma}^k\}$ for some $\eta_{\sigma}^k>0$. Consider the smooth convex sets $\Co_{\sigma,s }^k:=\{x:\,{\rm sd}_{\Co_{\sigma}^k(x)}\leq s\}$ for $s>-\eta_{\sigma}^k$. 

To approximate $E$ we first extend $\Chi{E}_{\big|_{\R^N\setminus\Co}}$ to a function $u\in BV(\R^N)$, with compact support, such that $|Du|(\pa\Co)=0$, $0\leq u\leq1$, see~\cite[Proposition~3.21]{AmbrosioFuscoPallara00}. Note that for all $t\in(0,1)$, $\{u>t\}\setminus\Co=E$. For any $\e>0, t\in(0,1)$ we set $U_{\e,t}=\{x:\,u_\e(x)>t\}$, where $u_\e=\varrho_\e*u$, for a standard mollifier $\varrho_\e$. 
Note that for a.e. $t\in(0,1)$ there exists a sequence $\e_n$ converging to zero such that 
\begin{equation}\label{appro2}
\begin{split}
& \lim_{n\to\infty}|U_{\e_n,t}\triangle\{u>t\}|=0, \quad \lim_{n\to\infty}P(U_{\e_n,t})=P(\{u>t\}), \\
&\pa U_{\e_n,t}\subset\Big\{x:\,{\rm dist}(x,\pa\{u>t\})<\frac{1}{n}\Big\},
\end{split}
\end{equation}
see~\cite[Theorem 3.42]{AmbrosioFuscoPallara00}. 
\par
Consider now the $C^\infty$ map $x\mapsto ({\rm sd}_{\Co_{\sigma}^k}(x),u_\e(x))$ defined for all $x\in O_{\sigma}^k$. By Sard's theorem we have that 
$$
{\rm rank}\bigg(\!\!
\begin{array}{c}
\nabla {\rm sd}_{\Co_{\sigma}^k}(x) \\
\nabla u_{\e_n}(x)
\end{array}
\!\!
\bigg)=2 \quad\text{on $\big\{x:\,{\rm sd}_{\Co_{\sigma}^k}(x)=s,\,\,u_{\e_n}(x)=t\big\}$ for a.e. $(s,t)\in(0,\infty)\times (0,1)$.}
$$
Keeping in mind that $\H^{N-1}(\pa^* E \cap \Co)>0$, a simple argument shows that there exists $\delta>0$ such that for any $t\in(0,1)$ and $s\in(0,\delta)$ the intersection $\{{\rm sd}_{\Co_{\sigma}^k}=s\}\cap\{\, u_{\e_n}=t\}$ is not empty for all $ n$ sufficiently large.
Hence, we may fix from now on $t\in (0,1)$ satisfying~\eqref{appro2} and such that for a.e. $s\in (0, \delta)$ the above rank condition holds for all $n$. Therefore for a.e. $s\in (0, \delta)$ the open set $\Om_{\sigma,\e_n,s}^k=U_{\e_n,t}\setminus\Co_{\sigma,s}^k$ is a Lipschitz domain such that $\pa\Om_{\sigma,\e_n,s}^k\setminus \Co_{\sigma,s}^k$ is a $C^\infty$ manifold with boundary. Note that for any $\sigma$ and $k$ we have that for a.e. $s$, $\H^{N-1}(\pa^* E\cap\pa\Co_{\sigma,s}^k)=0$. Therefore for all such $s$, we have
\beq\label{appro2.5}
\begin{split}
\lim_{n\to\infty}P(\Om_{\sigma,\e_n,s}^k;\R^N\setminus\Co_{\sigma,s}^k)
&=\lim_{n\to\infty}P(U_{\e_n,t};\R^N\setminus\Co_{\sigma,s}^k)\\
&=P(E;\R^N\setminus\Co_{\sigma,s}^k)=P(E\setminus\Co_{\sigma,s}^k;\R^N\setminus\Co_{\sigma,s}^k).
\end{split}
\eeq
From the above convergence and the continuity of the trace operator for $BV$ functions, see~\cite[Theorem~3.88]{AmbrosioFuscoPallara00} we have that
\beq\label{appro2.6}
\lim_{n\to\infty}\H^{N-1}(\pa\Om_{\sigma,\e_n,s}^k\cap\pa\Co_{\sigma,s}^k)=\H^{N-1}(\pa^*(E\setminus\Co_{\sigma,s}^k)\cap\pa\Co_{\sigma,s}^k)=\H^{N-1}(E\cap\pa\Co_{\sigma,s}^k),
\eeq
where the last equality follows from the fact that $\H^{N-1}(\pa^* E\cap\pa\Co_{\sigma,s}^k)=0$. Observe now that, since $\Co_{\sigma,s}^k$ converge to $\Co_{\sigma}^k$ in the Kuratowski sense, as $s\to 0$, we have in particular that $\H^{N-1}\res\pa\Co_{\sigma,s}^k\wstar\H^{N-1}\res\pa\Co_{\sigma}^k$, see for instance~\cite[Remark~2.2]{FFLM22}. Therefore, thanks to~\eqref{appro1} we conclude that $\H^{N-1}(E\cap\pa\Co_{\sigma,s}^k)\to\H^{N-1}(E\cap\pa\Co_{\sigma}^k)=\H^{N-1}(\pa^*(E\setminus\Co_{\sigma}^k)\cap\pa\Co_{\sigma}^k)$. Thus we have
\beq\label{appro3}
\begin{split}
&\lim_{s\to 0}P(E\setminus\Co_{\sigma,s}^k;\R^N\setminus\Co_{\sigma,s}^k)=P(E\setminus\Co_{\sigma}^k;\R^N\setminus\Co_{\sigma}^k)\\
&\lim_{s\to 0}\H^{N-1}(\pa^*(E\setminus\Co_{\sigma,s}^k)\cap\pa\Co_{\sigma,s}^k)=\H^{N-1}(\pa^*(E\setminus\Co_{\sigma}^k)\cap\pa\Co_{\sigma}^k).
\end{split}
\eeq
By a similar argument, if $\sigma>0$ is such that $\H^{N-1}(\pa^* E\cap\pa(1+\sigma)\Co)=0$, we have
\beq\label{appro4}
\begin{split}
&\lim_{k\to\infty}P(E\setminus\Co_{\sigma}^k;\R^N\setminus\Co_{\sigma}^k)=P(E\setminus\Co_\sigma;\R^N\setminus\Co_\sigma),\\
&\lim_{k\to\infty}\H^{N-1}(\pa^*(E\setminus\Co_{\sigma}^k)\cap\pa\Co_{\sigma}^k)=\H^{N-1}(\pa^*(E\setminus\Co_\sigma)\cap\pa\Co_\sigma).
\end{split}
\eeq
Finally, we note that by monotone convergence
\beq\label{appro5}
\lim_{\sigma\to 0}P(E\setminus\Co_\sigma;\R^N\setminus\Co_\sigma)=\lim_{\sigma\to 0}P(E;\R^N\setminus\Co_\sigma)=P(E;\R^N\setminus\Co).
\eeq
By scaling, this is equivalent to say that 
\[
\lim_{\sigma\to 0}P\big(\big((1+\sigma)^{-1}E\big)\setminus\Co;\R^N\setminus\Co\big)=P(E;\R^N\setminus\Co).
\]
Therefore, the trace Theorem again implies that 
\beq\label{appro6}
\begin{split}
&\lim_{\sigma \to 0}\H^{N-1}(\pa^*(E\setminus\Co_\sigma)\cap\pa\Co_\sigma)\\
&=\lim_{\sigma\to 0}(1+\sigma)^{N-1}\H^{N-1}\big(\pa^*\big((1+\sigma)^{-1} E\big)\setminus\Co\big)\cap\pa\Co)
=\H^{N-1}(\pa^* E\cap \Co).
\end{split}
\eeq
From this equality, together with~\eqref{appro2.5}-\eqref{appro5} we conclude, by a diagonal argument, that there exist sequences $s_n\to 0^+$, $k_n\to\infty$ and $\sigma_n\to 0^+$ such that, setting $\Om_n=\Om_{\sigma_n,\e_n,s_n}^{k_n}$, $\Co_n=\Co^{k_n}_{\sigma_n,s_n}$, (i)--(v) hold. 

If $E$ is now a general set of finite perimeter, the conclusion follows through a further diagonal procedure by approximating $E$ with a sequence of bounded sets $\{E_n\}$ contained in $\R^N\setminus \Co$ in such way that $|E_n\triangle E|\to 0$ and $P(E_n; \R^N\setminus \Co)\to P(E; \R^N\setminus \Co)$, which implies $\H^{N-1}(\pa^* E_n\cap\Co)\to\H^{N-1}(\pa^*E\cap \Co)$ by the continuity of the trace operator. 
\end{proof}

We now prove the second approximation lemma. 

\begin{proof}[\bf Proof of Lemma~\ref{approbis}]

We start by observing that by the classical regularity for volume-constrained perimeter minimizers $\pa\Om\setminus \Co$ is smooth outside a (relatively closed) singular set $\Sigma_{sing}$ of Hausdorff dimension at most $N-8$. Moreover, since $\Om$ is a $\Lambda$-minimiser of the capillary energy (see for instance~\cite{FFLM22}), it can be shown that $\Om$ is open and satisfies the following perimeter density estimates: there exist $c_0$, $r_0>0$ such that for all $x\in \pa \Om$ and for all $r\in (0, r_0)$
\beq\label{approbis1}
\H^{N-1}(\pa\Om\cap B_r(x))\ge c_0 r^{N-1}\,.
\eeq
Note that this estimate in particular implies that $\Omega$ is bounded, so we can take a ball $B_R$ such that $\Om\subset\!\subset B_R$; moreover, we assume without loss of generality that the interior of $\Co$ contains the origin of $\R^N$.
\par
Let $\Co_{\sigma}^k\subset\R^N$, $\sigma>0$ and $k\in\N$, be a family of smooth convex sets extactly as in the proof of Lemma~\ref{appro}, satisfying in particular~\eqref{appro1}.

We consider the signed distance function ${\rm sd}_{\Co_{\sigma}^k}(x)$ from $\pa\Co_{\sigma}^k$, which is a $C^\infty$ function in $O_{\sigma}^k=\{x:\,{\rm sd}_{\Co_{\sigma}^k}(x)>-\eta_{\sigma}^k\}$ for some $\eta_{\sigma}^k>0$. Consider the smooth convex sets $\Co_{\sigma,s }^k:=\{x:\,{\rm sd}_{\Co_{\sigma}^k(x)}\leq s\}$ for $s>-\eta_{\sigma}^k$.

Note that if $\Om$ is an open set of finite perimeter satisfying~\eqref{approbis1}, then by~\cite[Theorem~5]{ACV08} we have that the outer $(N-1)$-dimensional Minkowski content of $\pa \Om$ coincides with $P(\Om)$; that is, 
\[
P(\Om)=\lim_{\varrho\to 0^+}\frac{\mathcal L^N\big((\Om)_\varrho\setminus \Om\big)}{\varrho}\,.
\]
Thus, by the coarea formula, the above equality may be rewritten as
\beq\label{approbis2}
P(\Om)=\lim_{\varrho\to 0^+}\frac1\varrho\int_0^\varrho P\big(\{\sd_\Om<t\}\big)\, dt\,,
\eeq
where $\sd_\Om$ stands for the signed distance function from $\pa \Om$. 
For $\e>0$ set $\sd_\Om^\e:=\rho_\e * \sd_\Om$, where $\rho_\e$ is a standard mollifier, and note that by the Dominated Convergence Theorem
\[
\int_{\{0<\sd_\Om^\e<\rho\}}|\nabla \sd_\Om^\e|\, dx\to \mathcal L^N((\Om)_\varrho\setminus \Om)
\]
as $\e\to 0^+$. Therefore, by~\eqref{approbis2} and again the Coarea Formula, we have for any sequence $\e_m\to 0^+$
\[
\lim_{n\to\infty}\lim_{m\to\infty}\int_{0}^1P\Big(\Big\{\sd_\Om^{\e_m}<\frac{t}n\Big\}\Big)\,dt=P(\Om)\,.
\]
In turn, by Fatou's Lemma, the lower semicontinuity of the perimeter and by Sard's Theorem, we obtain that for almost every $t\in (0,1)$ we have
\beq\label{approbis2.25}
\Big\{\sd_\Om^{\e_m}<\frac{t}n\Big\}\text{ is smooth }\quad\text{and}\quad\liminf_{n\to\infty}\liminf_{m\to\infty}P\Big(\Big\{\sd_\Om^{\e_m}<\frac{t}n\Big\}\Big)=P(\Om)\,.
\eeq
For any $m,n$ we consider the $C^\infty$ map $x\mapsto ({\rm sd}_{\Co_{\sigma}^k}(x), n\,\sd^{\e_m}_\Om(x))$ defined for all $x\in O_{\sigma}^k$. By Sard's theorem we have that 
\[
{\rm rank}\bigg(\!\!
\begin{array}{c}
\nabla {\rm sd}_{\Co_{\sigma}^k}(x) \\
n \nabla \sd^{\e_m}_\Om(x)
\end{array}
\!\!
\bigg)=2 ~\text{on $\big\{x:\,{\rm sd}_{\Co_{\sigma}^k}(x)=s,\,\,n\, \sd^{\e_m}_\Om(x)=t\big\}$ for a.e. $(s,t)\in(0,\infty)\times (0,1)$.}
\]
Since by the minimality of $\Om$, we have $\H^{N-1}(\pa\Omega \cap \Co)>0$, a simple argument shows that there exists $\delta>0$ such that for any $t\in(0,1)$ and $s\in(0,\delta)$ the intersection $\{{\rm sd}_{\Co_{\sigma}^k}(x)=s\}\cap\{n\, \sd^{\e_m}_\Om(x)=t\}\not=\emptyset$ for all $m, n$ sufficiently large.
Hence we can fix $t\in (0,1)$ satisfying 
\eqref{approbis2.25} such that the above rank condition holds for all $s\in (0,\delta)\setminus \mathcal Z$ and for all $m,n$ sufficiently large, where $ \mathcal Z$ is a negligible set. Note that possibly replacing $ \mathcal Z$ with a larger (not renamed) neglibile set we may also assume that 
\beq\label{approbis2.5}
\H^{N-1}(\pa\Om\cap\pa\Co_{\sigma,s}^k)=0\quad\text{for all }s\in (0,\delta)\setminus \mathcal Z\,.
\eeq
At this point, we may also extract subsequences $m_j, n_j$ such that 
\beq\label{approbis3}
\lim_{j\to\infty}P\Big(\Big\{\sd_\Om^{\e_{m_j}}<\frac{t}{n_j}\Big\}\Big)=P(\Om)\,.
\eeq
For any $s\in (0,\delta)\setminus \mathcal Z$ and $j$ sufficiently large, we can define
\[
\Om^k_{\sigma, j, s}:=\Big\{\sd^{\e_{m_j}}_\Om<\frac{t}{n_j}\Big\}\setminus \Co^k_{\sigma, s}\,.
\]
Note that~\eqref{approbis2.5} and~\eqref{approbis3} imply that for all $s\in (0,\delta)\setminus \mathcal Z$, we have
\beq\label{appro2.5bis}
\lim_{j\to\infty}P(\Om_{\sigma,j,s}^k;\R^N\setminus\Co_{\sigma,s}^k)=P(\Om;\R^N\setminus\Co_{\sigma,s}^k)=P(\Om\setminus\Co_{\sigma,s}^k;\R^N\setminus\Co_{\sigma,s}^k).
\eeq
From the above convergence and the continuity of the trace operator we have that
\beq\label{appro2.6bis}
\lim_{j\to\infty}\H^{N-1}(\pa\Om_{\sigma,j,s}^k\cap\pa\Co_{\sigma,s}^k))=\H^{N-1}(\pa(\Om\setminus\Co_{\sigma,s}^k)\cap\pa\Co_{\sigma,s}^k)=\H^{N-1}(\Om\cap\pa\Co_{\sigma,s}^k),
\eeq
where the last equality follows from~(\ref{approbis2.5}). 

We can argue exactly as in the second part of the proof of Lemma~\ref{appro}, to infer that~\eqref{appro3}--\eqref{appro6} hold. 
From these equalities, together with ~\eqref{appro2.5bis} and~\eqref{appro2.6bis}, by a diagonal argument, that there exist sequences $k_j\to\infty$, $s_j, \sigma_j\to 0$, with $s_j\in (0,\delta)\setminus \mathcal Z$, such that $\Om_j:=\Om^{k_j}_{\sigma_j, j, s_j}$ and $\Co_j:=\Co^{k_j}_{\sigma_j, s_j}$ satisfy (i)--(v) with $E$ replaced by $\Om$.

Finally, property (vi) follows by observing that if $x\in \pa \Om\setminus \Sigma_{sing}$ and $\sd_\Om$ is smooth in $\overline B_{2\rho}(x)$, then $\sd^{\e_{m_j}}_\Om\to \sd_\Om$ in $C^{\infty}(\overline B_\rho(x))$, which in turn easily implies~(vi).
\end{proof}

\section*{Aknowledgments} 
\noindent N.~F., M.~M. and A.~P. were supported by PRIN Project 2022E9CF89, “Geometric Evolution Problems and Shape Optimization (GEPSO)”, PNRR Italia Domani, financed by European Union via the Program NextGenerationEU. N.~F., M.~M. and A.~P. are members of the Gruppo Nazionale per l’Analisi
Matematica, la Probabilit\`a e le loro Applicazioni (GNAMPA), which is part of the Istituto
Nazionale di Alta Matematica (INdAM). V.~J.~was supported by the Academy of Finland grant 314227. 

\bibliographystyle{siam}
\bibliography{isoconvex}
\end{document}